\numberwithin{equation}{section}
\newtheorem{theorem}{Theorem}[section]
\newtheorem{lemma}[theorem]{Lemma}
\newtheorem{definition}[theorem]{Definition}
\newtheorem{remark}[theorem]{Remark}
\newtheorem{proposition}[theorem]{Proposition}
\newcommand{\N}{\mathbb{N}}
\newcommand{\R}{\mathbb{R}}
\newcommand{\Z}{\mathbb{Z}}
\newcommand{\Leb}[1]{{\mathscr L}^{#1}} 
\newcommand{\x}{\times}
\renewcommand{\a}{\alpha}
\newcommand{\e}{\varepsilon}
\renewcommand\div{\operatorname{div}}
\newcommand{\supp}{\operatorname{supp}}
\newcommand{\vo}{\vec{o}\@ifnextchar{^}{\,}{}}
 \newcommand{\bb}{{\mbox{\boldmath$b$}}}
 \newcommand{\uu}{{\mbox{\boldmath$u$}}}
 \newcommand{\ww}{{\mbox{\boldmath$w$}}}
 \newcommand{\tauV}{{\kern-3pt\tau}}
 \newcommand{\XX}{{\mbox{\boldmath$X$}}}
 \newcommand{\oVVVk}{\overline{\mbox{\boldmath$V$}}\kern-3pt}
 \newcommand{\tVVVk}{\tilde{\mbox{\boldmath$V$}}\kern-3pt}
	\title{A remark on selection of solutions for the transport equation} 
\author [J. Pitcho]{Jules Pitcho}
\address{Jules Pitcho
	\hfill\break  ENS  de Lyon, UMPA, 46 all\'ee d'Italie, 
	69364 Lyon,
	France}
\email{jules.pitcho@ens-lyon.fr}
\begin{document}
\begin{abstract}
	We prove that for bounded, divergence-free vector fields in $L^1_{loc}((0,+\infty);BV_{loc}(\R^d;\R^d))$ regularisation by convolution of the vector field selects a single solution of the transport equation for any locally integrable initial datum. We recall the vector field constructed by Depauw in \cite{Depauw}, which lies in the above class of vector fields. We show that the transport equation along this vector field has at least two bounded weak solutions for any bounded initial datum. 
\end{abstract}

	\maketitle 
	
	\section{Introduction} 
\subsection{The Cauchy problem} 
In this note, we study the initial value problem for the continuity equation posed on $[0,+\infty)\times\R^d$,
\begin{equation}\label{eqPDE}\tag{IVP} 
\left\{
\begin{split}
\partial_t \rho+ \div_x(\bb \rho)&=0,\\
\rho(0,x)&=\bar\rho(x),
\end{split}
\right.
\end{equation}
where $\bb=\bb(t,x)$ is a given vector field, $\rho=\rho(t,x)$ is an unknown real-valued function, and $\div_x$ is the divergence operator on vector fields on $\R^d$. We are interested in weak solutions of \eqref{eqPDE}.

\begin{definition}\label{defPDE}
	Consider a bounded vector field $\bb:[0,+\infty)\times\R^d\to\R^d$ and an initial datum $\bar\rho\in L^1_{loc}(\R^d)$. 
	We shall say $\rho\in L_{loc}^\infty([0,+\infty);L^1_{loc}(\R^d))$ is a weak solution to \eqref{eqPDE} along $\bb$, if for every $\phi \in C_c^\infty([0,+\infty)\times\mathbb{R}^d)$
	\begin{equation*}
	\int_0^{+\infty}\int_{\mathbb{R}^d} \rho\left(\frac{\partial \phi}{\partial t} + {\bb}\cdot\nabla_x \phi\right) \; dxdt = - \int_{\mathbb{R}^d} \bar\rho(x) \phi(0,x) \; dx.
	\end{equation*}
	If, additionally $\rho\in L_{loc}^\infty([0,+\infty);L^\infty(\R^d))$, we shall say that it is a bounded weak solution to \eqref{eqPDE} along $\bb$. 
\end{definition}
We are interested in selection of weak solutions of \eqref{eqPDE} when they are non-unique. Let us recall the classical theory of existence and uniqueness of weak solutions of \eqref{eqPDE}. 
\bigskip 
\subsection{The classical theory}
Given a bounded vector field $\bb:[0,+\infty)\times\R^d\to\R^d$, it is convenient to work with its extension by zero to $\R\times\R^d$, which we denote by $\tilde\bb$ and define as 
\begin{equation}\label{eqn_extension_zero} 
\tilde\bb(t,x):=
\left\{
\begin{split}
\bb(t,x) \qquad &{\rm if }\quad (t,x)\in[0,+\infty)\times\R^d,\\
0\qquad &{\rm if} \quad (t,x)\notin [0,+\infty)\times\R^d.
\end{split} 
\right. 
\end{equation}

When $\bb$ is locally Lipschitz continuous in $x$ with Lipschitz constants on compact sets, which are time integrable, the Cauchy-Lipschitz theorem provides unique global solutions on $\R$ to the ODE understood in the sense of distributions
\begin{equation}\label{ODE}\tag{ODE}
\left\{
\begin{split}
\partial_t\XX(t,s,x)&=\tilde \bb(t,\XX(t,s,x)),\\
\XX(s,s,x)&=x,
\end{split}
\right. 
\end{equation}
for every $s\in\R$.
These solutions are then bundled into a 2-parameter family of maps $\XX:\R\times \R\times \R^d\to \R^d$, which we will call the flow along $\bb$, and satisfies the classical stability estimate
\begin{equation*}
|x_1-x_2|\exp\Big(-\Big|\int_s^t\|\nabla_x \bb(u)\|_{L_x^\infty}du\Big|\Big)\leq 	|\XX(t,s,x_1)-\XX(t,s,x_2)|\leq |x_1-x_2|\exp\Big(\Big|\int_s^t\|\nabla_x \bb(u)\|_{L_x^\infty}du\Big|\Big),
\end{equation*}
and the group property for every $r,s,t\in\R$
\begin{equation*}
\XX(t,s,\XX(s,r,\cdot))=\XX(t,r,\cdot).
\end{equation*}

In this setting, weak solutions $\rho$ of \eqref{eqPDE} along $\bb$ are uniquely given by the classical formula
\begin{equation}\label{eqn_representation}
\rho(t,\cdot)\Leb{d}=\XX(t,0,\cdot)_\#\bar\rho\Leb{d}. 
\end{equation}

If the vector field $\bb$ satisfies suitable growth assumptions, then the global existence and uniqueness of the flow $\XX$ is still valid. For simplicity, we choose to restrict ourselves to bounded vector fields in this work. Bounded vector fields to which the classical theory applies will be refered to as smooth vector fields. On the contrary, bounded vector fields to which this theory does not apply will be called non-smooth vector fields. 

\bigskip 

\subsection{Basic definitions} 
Our purpose is to study the selection of weak solutions of \eqref{eqPDE} defined by the classical theory under regularisation of a non-smooth $\bb$. A regularisation of $\bb$ is a sequence $(\bb^k)_{k\in\N}$ in $C^\infty([0,+\infty)\times\R^d;\R^d)$ such that $\bb^k\to\bb$ in $L^1_{loc}$. This leads to the following definition. 

\begin{definition}\label{defn_well_posedness_reg}
	Consider a bounded vector field $\bb:[0,+\infty)\times\R^d\to \R^d$ and an initial datum $\bar\rho \in L^1_{loc}(\R^d)$. We shall say that the Cauchy problem \eqref{eqPDE} is well-posed along a regularisation $(\bb^k)_{k\in\N}$, if  the sequence of unique weak solutions $\rho^k$ of \eqref{eqPDE} along $\bb^k$ converges uniquely in $\mathcal{D}'((0,+\infty)\times\R^d)$ as $k\to+\infty$. 
\end{definition}
\begin{remark}
	We can similarly define well-posedness along a regularisation $(\bb^h)_{h\in I}$, where $I$ is not necessarily countable. For simplicity, in this paper we restrict our attention to regularisations indexed by the natural numbers. 
\end{remark}
For a smooth $\bb$, the Cauchy problem is well-posed along any regularisation of $\bb$ by using \eqref{eqn_representation} and the classical stability of the flow $\XX$ under smooth perturbation of $\bb$. 
If we approximate a non-smooth $\bb$ in a stronger topology than $L^1_{loc}$, then we can have stronger convergence than in $\mathcal{D}'((0,+\infty)\times\R^d)$ for the solutions along the approximation. This will be a key fact in the proof our main theorem. 

For a non-smooth $\bb$, weak limits of weak solutions of \eqref{eqPDE} along a regularisation $(\bb^k)_{k\in\N}$ are expected to be non-unique, and to depend on the choice of the regularisation. A stronger notion of well-posedness is therefore when the weak limit coincides for several regularisations of $\bb$. 
This leads to the following definition. 
\begin{definition}
	Consider a bounded vector field $\bb:[0,+\infty)\times\R^d\to \R^d$ and an initial datum $\bar\rho\in L^1_{loc}(\R^d)$. Consider a family $\mathcal{R}$ consisting of regularisations of $\bb$. We shall say that the Cauchy problem \eqref{eqPDE} is well-posed along $\mathcal{R}$, if there exists a unique { $\rho\in L_{loc}^1([0,+\infty)\times\R^d)$} such that for every $(\bb^k)_{k\in\N}\in \mathcal{R}$, the unique weak solutions $\rho^k$ of \eqref{eqPDE} along $\bb^k$ converge to $\rho$ in $\mathcal{D}'((0,+\infty)\times\R^d)$ as $k\to+\infty$.
\end{definition}

In the study of non-linear wave equations with rough initial data, a similar notion of well-posedness is used (see the reviews \cite{Tzv19Randomdatawave,Tzv23nonlinearnoise}). In fact, if the initial datum is random and rough, the Cauchy problem can be well-posed with respect to some regularisation but not with respect to another \cite{Tzv19Randomdatawave,SunTzv20CRAS}.
\bigskip

\subsection{Review of relevant results}
Let us review the known existence and uniqueness results on weak solutions of \eqref{eqPDE} when $\bb$ is non-smooth. 
The work of Ambrosio \cite{A04} following on the work DiPerna-Lions \cite{DPL89} reads as follows in the context of divergence-free vector fields.
\begin{theorem}\label{thm_uniqueness_bv} 
	Consider a bounded, divergence-free vector field $\bb:[0,+\infty)\times\R^d\to\R^d$, and an initial datum $\bar\rho\in L^\infty(\R^d)$. Assume that $\bb\in L_{loc}^1([0,+\infty);BV_{loc}(\R^d;\R^d))$. Then, there exists a unique bounded weak solution of \eqref{eqPDE}.
\end{theorem}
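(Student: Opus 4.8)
The plan is to prove existence and uniqueness separately, the uniqueness part resting on the \emph{renormalization property}, which is the technical heart of the statement. For existence I would regularise the extended field: set $\bb^k:=\tilde\bb*\eta_{1/k}$ with $\eta$ a standard space--time mollifier, so that $\bb^k\in C^\infty$, $\|\bb^k\|_{L^\infty}\le\|\bb\|_{L^\infty}=:M$, $\div_x\bb^k=0$, and $\bb^k\to\bb$ in $L^1_{loc}([0,+\infty)\times\R^d)$. The Cauchy-Lipschitz flow $\XX^k$ is then globally defined, smooth and measure-preserving (because $\div_x\bb^k=0$), so by \eqref{eqn_representation} the unique weak solution of \eqref{eqPDE} along $\bb^k$ is $\rho^k(t,x)=\bar\rho(\XX^k(0,t,x))$, which satisfies $\|\rho^k\|_{L^\infty([0,+\infty)\times\R^d)}\le\|\bar\rho\|_{L^\infty}$. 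By weak-$*$ compactness (with a diagonal argument over growing time--space boxes) a subsequence satisfies $\rho^k\overset{*}{\rightharpoonup}\rho$; since $\bb^k\to\bb$ strongly in $L^1_{loc}$ while the $\bb^k$ are equibounded, $\rho^k\bb^k\rightharpoonup\rho\bb$ in $L^1_{loc}$, and passing to the limit in the identity of Definition~\ref{defPDE} shows $\rho$ is a bounded weak solution with $\|\rho\|_{L^\infty}\le\|\bar\rho\|_{L^\infty}$.

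For uniqueness, if $\rho_1,\rho_2$ are two bounded weak solutions with the same datum then $w:=\rho_1-\rho_2$ is a bounded weak solution with $w(0,\cdot)=0$, and it suffices to show $w\equiv0$. One cannot test the equation against $w$ itself; the mechanism is the \emph{renormalization property}: for every $\beta\in C^1(\R)\cap\mathrm{Lip}(\R)$, the function $\beta(w)$ is again a weak solution of \eqref{eqPDE} along $\bb$ with datum $\beta(0)=0$, i.e.\ $\partial_t\beta(w)+\div_x(\bb\,\beta(w))=0$ in $\mathcal D'$ (no source term appears, since $\div_x\bb=0$).

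The hard part is establishing this renormalization property, and it is the only place where the $BV_{loc}$ hypothesis enters. Mollifying in space, $w_\eps:=w(t,\cdot)*\eta_\eps$ solves $\partial_t w_\eps+\bb\cdot\nabla_x w_\eps=r_\eps$ with commutator $r_\eps:=\bb\cdot\nabla_x w_\eps-\div_x\!\big((\bb w)*\eta_\eps\big)$; if $r_\eps\to0$ in $L^1_{loc}$ one applies the classical chain rule to the regularised equation and lets $\eps\to0$ to obtain the renormalized equation for $\beta(w)$. The convergence $r_\eps\to0$ is the commutator estimate: DiPerna-Lions \cite{DPL89} proved it for $\bb\in W^{1,1}_{loc}$ via a Friedrichs-type lemma, and Ambrosio \cite{A04} extended it to $\bb\in BV_{loc}$. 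The BV case is substantially harder because $D\bb$ carries a singular part; the decisive ingredient is Alberti's rank-one theorem, which forces the polar of $D^s\bb$ to be a rank-one matrix $|D^s\bb|$-a.e., so that choosing the mollifier anisotropically (adapted to the direction singled out at each point by $D^s\bb$) makes the singular contribution to $r_\eps$ vanish in the limit. In the write-up I would quote this commutator estimate rather than reprove it.

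Once $w$ is renormalized, I would conclude by finite speed of propagation. Take $\beta(s)=s^2$ (truncated outside $[-\|w\|_{L^\infty},\|w\|_{L^\infty}]$ to a bounded $C^1$ function, which is harmless as $w$ is bounded), so $\partial_t(w^2)+\div_x(\bb\,w^2)=0$ with $w^2(0,\cdot)=0$. Fix $(t_0,x_0)$ with $t_0>0$, fix $r>0$, and integrate this identity over the truncated backward cone $\mathcal C_r:=\{(t,x):0\le t\le t_0,\ |x-x_0|\le r+M(t_0-t)\}$, testing against a smooth approximation of the indicator of $\mathcal C_r$. The outward space--time normal on the lateral boundary is proportional to $\big(M,\tfrac{x-x_0}{|x-x_0|}\big)$, so the normal flux of $(w^2,\bb\,w^2)$ there equals $\tfrac{w^2}{\sqrt{1+M^2}}\big(M+\bb\cdot\tfrac{x-x_0}{|x-x_0|}\big)\ge\tfrac{w^2}{\sqrt{1+M^2}}(M-|\bb|)\ge0$; since the flux through $\{t=0\}$ vanishes (as $w(0,\cdot)=0$), the divergence theorem gives
\[
\int_{|x-x_0|\le r} w^2(t_0,x)\,dx\ \le\ 0,
\]
hence $w(t_0,\cdot)=0$ a.e.\ on $B_r(x_0)$. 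As $r$, $x_0$ and $t_0$ are arbitrary, $w\equiv0$. This step uses the standard fact that a bounded weak solution admits a representative for which $t\mapsto\rho(t,\cdot)$ is weakly-$*$ continuous, so that time slices and the initial trace are meaningful. Combined with existence, this proves the theorem.
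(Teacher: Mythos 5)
Your proposal is correct, and it is exactly the argument the paper has in mind: the paper does not prove Theorem~\ref{thm_uniqueness_bv} itself but quotes it as the known result of DiPerna--Lions \cite{DPL89} and Ambrosio \cite{A04}, remarking only that ``the proof is based on a commutator estimate showing that any bounded weak solution of \eqref{eqPDE} must be renormalised.'' Your write-up fleshes out that standard proof --- existence by mollifying $\bb$, using that $\div_x\bb^k=0$ to get the uniform $L^\infty$ bound, and passing to the weak-$*$ limit (the same scheme as the paper's proof of Theorem~\ref{thm_existence_boundary_value}); uniqueness by renormalising the difference via Ambrosio's commutator estimate and concluding with the finite-speed-of-propagation cone argument for $w^2$ --- with the only black box being the commutator estimate itself, which is the appropriate thing to cite.
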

The uniqueness part of the above theorem was proved in \cite{A04} assuming only that for almost every $t\in\R$ the divergence at time $t$ of $\bb$ is absolutely continuous with respect to $\Leb{d}$. The proof is based on a commutator estimate showing that any bounded weak solution of \eqref{eqPDE} must be renormalised.
By a fine measure-theoretic analysis of Lagrangian representations of solutions of \eqref{eqPDE}, the hypothesis on the divergence has been further relaxed to \emph{near incompressibility} in the work of Bianchini and Bonicatto \cite{BianchiniBonicatto20}.

In general, when uniqueness of bounded weak solutions holds for \eqref{eqPDE}, then well-posedness  in the sense of Definition \ref{defn_well_posedness_reg} along any regularisation of $\bb$ is true, up to some mild assumptions on the regularisations ensuring that the solutions of \eqref{eqPDE} along the regularised vector fields remain uniformly bounded. More precisely, the following statement may be deduced from \cite{A04} (see also the Appendix for a proof).

\begin{proposition}\label{prop_well_posed}
	{ Consider an initial datum $\bar\rho\in L^\infty(\R^d)$.} Assume the hypothesis of Theorem \ref{thm_uniqueness_bv} on $\bb$. Consider a regularisation $(\bb^k)_{k\in \N}$ of $\bb$ such that 
	\begin{equation}\label{eq_lebesgue_measure}
	\XX^k(t,0,\cdot)_\#\Leb{d}\leq C\Leb{d},
	\end{equation} 
	 where the constant $C$ is uniform in $k$ and on compact time intervals. 
	Then, the Cauchy problem \eqref{eqPDE} is well-posed along the regularisation $(\bb^k)_{k\in\N}$. 
\end{proposition}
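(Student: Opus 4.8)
The plan is to obtain the convergence of $(\rho^k)_{k\in\N}$ by combining a uniform $L^\infty$ bound, weak-$*$ compactness, stability of the weak formulation under the available convergences, and the uniqueness provided by Theorem~\ref{thm_uniqueness_bv}.

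First, since each $\bb^k$ is smooth, the classical theory recalled above gives a unique weak solution $\rho^k$ of \eqref{eqPDE} along $\bb^k$, represented by \eqref{eqn_representation} as $\rho^k(t,\cdot)\Leb{d}=\XX^k(t,0,\cdot)_\#(\bar\rho\,\Leb{d})$ (the global existence of $\XX^k$ being implicit already in \eqref{eq_lebesgue_measure}). Bounding the total variation of this pushforward by
\[
\bigl|\XX^k(t,0,\cdot)_\#(\bar\rho\,\Leb{d})\bigr|\le \|\bar\rho\|_{L^\infty(\R^d)}\,\XX^k(t,0,\cdot)_\#\Leb{d}\le C\,\|\bar\rho\|_{L^\infty(\R^d)}\,\Leb{d},
\]
where the last inequality is \eqref{eq_lebesgue_measure}, shows that $\rho^k$ is represented by an $L^\infty$ function with $\|\rho^k(t,\cdot)\|_{L^\infty(\R^d)}\le C\|\bar\rho\|_{L^\infty(\R^d)}$ for a.e.\ $t$ in any fixed compact time interval, the constant being independent of $k$. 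Hence $(\rho^k)_{k\in\N}$ is bounded in $L^\infty([0,T]\times\R^d)$ for every $T>0$, and by Banach--Alaoglu together with a diagonal extraction over an exhaustion of $[0,+\infty)\times\R^d$, every subsequence has a further subsequence $(\rho^{k_j})_j$ converging weakly-$*$ in $L^\infty_{loc}([0,+\infty)\times\R^d)$ to some $\rho$ with the same bound, so in particular $\rho\in L^\infty_{loc}([0,+\infty);L^\infty(\R^d))$.

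Next, I would show that any such limit $\rho$ is the unique bounded weak solution of \eqref{eqPDE} along $\bb$ with datum $\bar\rho$. Fixing $\phi\in C_c^\infty([0,+\infty)\times\R^d)$ and passing to the limit in
\[
\int_0^{+\infty}\!\!\int_{\R^d}\rho^{k_j}\bigl(\partial_t\phi+\bb^{k_j}\cdot\nabla_x\phi\bigr)\,dxdt=-\int_{\R^d}\bar\rho(x)\phi(0,x)\,dx,
\]
the term in $\partial_t\phi$ converges by weak-$*$ convergence (as $\partial_t\phi\in L^1$), while for the transport term one uses that $\bb^{k_j}\cdot\nabla_x\phi\to\bb\cdot\nabla_x\phi$ strongly in $L^1$ (since $\bb^{k_j}\to\bb$ in $L^1_{loc}$ and $\nabla_x\phi$ is bounded with compact support) and splits $\rho^{k_j}\bb^{k_j}\cdot\nabla_x\phi-\rho\,\bb\cdot\nabla_x\phi=\rho^{k_j}(\bb^{k_j}-\bb)\cdot\nabla_x\phi+(\rho^{k_j}-\rho)\,\bb\cdot\nabla_x\phi$: the first part is controlled by $C\|\bar\rho\|_{L^\infty}\|\nabla_x\phi\|_{L^\infty}\|\bb^{k_j}-\bb\|_{L^1(\supp\phi)}\to0$, and the second vanishes by weak-$*$ convergence against $\bb\cdot\nabla_x\phi\in L^1$. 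Thus $\rho$ satisfies Definition~\ref{defPDE}, and Theorem~\ref{thm_uniqueness_bv} forces $\rho$ to equal the unique bounded weak solution $\rho^\infty$ of \eqref{eqPDE} along $\bb$ with datum $\bar\rho$. Since every subsequential weak-$*$ limit is $\rho^\infty$, a standard subsequence argument gives $\rho^k\to\rho^\infty$ weakly-$*$ in $L^\infty_{loc}([0,+\infty)\times\R^d)$, in particular in $\mathcal{D}'((0,+\infty)\times\R^d)$, which is the asserted well-posedness.

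The substantive input is Theorem~\ref{thm_uniqueness_bv}, which I may assume; the one point to handle with care is the role of hypothesis \eqref{eq_lebesgue_measure}, which is exactly what keeps the mass of the $\rho^k$ from concentrating, and thereby both yields the uniform $L^\infty$ bound needed for weak-$*$ precompactness and allows the nonlinear term $\rho^{k_j}\bb^{k_j}$ to pass to the limit in the weak formulation. Everything else is soft functional analysis.
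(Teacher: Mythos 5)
Your proposal is correct and follows essentially the same route as the paper's own proof: the uniform $L^\infty$ bound from \eqref{eq_lebesgue_measure} via the pushforward representation, weak-$*$ compactness by Banach--Alaoglu, identification of every subsequential limit with the unique bounded weak solution from Theorem~\ref{thm_uniqueness_bv}, and the subsubsequence argument to conclude convergence of the whole sequence. Your treatment of the passage to the limit in the transport term is simply a more detailed spelling-out of the step the paper disposes of with ``as $\bb^k\to\bb$ strongly in $L^1_{loc}$.''
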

It would be interesting to establish how much \eqref{eq_lebesgue_measure} can be relaxed, and whether solutions with similar properties as those obtained by convex integration (see \cite{ML18, MS20, MoSzRenormalized, BDLC20, pitcho2021nonuniqueness,sattig2023baire,buck2022failure}) may be constructed as limit of smooth approximations of \eqref{eqPDE} when \eqref{eq_lebesgue_measure} fails by too much. These solutions are however ``pathological'' in that they are not selected by the vanishing diffusivity scheme, as shown by Bonicatto, Ciampa and Crippa in \cite{bonicatto21adv}. 

Let us now come to the non-uniqueness results. 
First Depauw constructed a bounded, divergence-free vector field $\bb_{DP}:[0,+\infty)\times\R^2\to\R^2$ in \cite{Depauw}, which is not in $L^1_{loc}([0,+\infty);BV_{loc}(\R^2;\R^2))$, and for which uniqueness of bounded weak solutions to \eqref{eqPDE} fails. { Ciampa, Crippa and Spirito then constructed in \cite{CiampaCrippaSpirito20} an unbounded, divergence-free and autonomous vector field, and a family of initial data, for which the Cauchy problem \eqref{eqPDE} is well-posed along two different regularisations of $\bb$, but the two corresponding solutions do not coincide.}
By adapting the construction of Depauw, De Lellis and Giri constructed in \cite{DeLellis_Giri22} a bounded vector field and an initial datum, for which the Cauchy problem \eqref{eqPDE} is well-posed along two different regularisations of $\bb$ (in the sense of Definition \ref{defn_well_posedness_reg}), but the two corresponding solutions do not coincide. 

This was then extended by Colombo, Crippa and Sorella in \cite{colombo2022anomalous}, where for every $\alpha\in[0,1)$, they construct divergence-free vector fields $\bb$ in $C^\alpha([0,2]\times\R^2)$, an initial datum $\bar\rho$ for which the Cauchy problem \eqref{eqPDE} is not well-posed (in the sense of Definition \ref{defn_well_posedness_reg}) along a certain regularisation of $\bb$, which is given by convolution with a standard mollifier. Although not directly related to the present work, we note that for those same vector fields, they show that the vanishing diffusivity regularisation scheme fails to select a single solution. In a recent contribution, Huysmans and Titi \cite{huysmans2023nonuniqueness} have moreover constructed a bounded vector field for which the vanishing diffusivity scheme selects a solution, which is not entropy-admissible in the sense of Dafermos \cite{Dafermos10ConsLaw}.

\bigskip
\subsection{Statement of the theorems}
For vector fields for which uniqueness of bounded weak solutions of \eqref{eqPDE} may fail, we are interested in well-posedness of the Cauchy problem along a whole regularisation class. 
We consider the regularisation class obtained by mollification of the vector field with an arbitrary standard mollifier. 
We recall that a function $\theta\in C_c^\infty(\R\times\R^d)$ is called a standard mollifier if $\theta\geq 0$, and $$\int_\R\int_{\R^d} \theta(t,x)dxdt=1.$$
We write for every $k\in\N$ 
$$\theta^k(t,x):=k^{d+1}\theta(kt,kx).$$
By a slight abuse of notation, the convolution $\bb\star \theta^k$ then denotes the restriction to $[0,+\infty)\times\R^d$ of the convolution $\tilde\bb\star \theta^k$, where we recall that $\tilde{\bb}$ defined in \eqref{eqn_extension_zero} is the extension by zero to negative times, { and we recall that $$(\tilde\bb\star \theta^k)(t,x):=\int_{\R}\int_{\R^d}\tilde\bb(t-s,x-y)\theta^k(s,y)dyds.$$}
We define the convolution regularisation class $$\mathcal{R}_{conv}:=\Big\{(\bb\star \theta^k)_{k\in\N} \,; \,\theta \in 
C_c^\infty(\R\times \R^d), \;\theta\geq 0,\; 
\int_{\R^{d+1}}
\theta(t,x)dtdx=1\Big\}.$$
\begin{remark}
	For a divergence-free vector field $\bb$ and a constant initial datum, the Cauchy problem \eqref{eqPDE} is automatically well-posed along $\mathcal{R}_{conv}$. { Indeed, in this case, for every $(\bb^k)_{k\in\N}\in\mathcal{R}_{conv}$, the constant function is the unique bounded weak solution of \eqref{eqPDE} along $\bb^k$ for every $k\in\N$. Clearly, this sequence of constant functions then converges uniquely to the same constant function.}
\end{remark}

We have the following well-posedness theorem for this regularisation class. 
\begin{theorem}\label{thm_uniqueness}
	Consider a bounded, divergence-free vector field $\bb:[0,+\infty)\times\R^d\to \R^d$ and {an initial datum $\bar\rho\in L_{loc}^1(\R^d)$.} Assume that $\bb\in L_{loc}^1((0,+\infty);{\rm BV_{loc}}(\R^d;\R^d))$. 
	Then, the Cauchy problem \eqref{eqPDE} is well-posed along the regularisation class $\mathcal{R}_{conv}$. 
\end{theorem}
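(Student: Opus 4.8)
The plan is to show that, over all standard mollifiers $\theta$, every subsequential distributional limit of the regularised solutions coincides with one and the same function $\rho$; this $\rho$ is then the object demanded by the definition of well-posedness along $\mathcal{R}_{conv}$. I first treat $\bar\rho\in L^\infty(\R^d)$ and then reduce the general case $\bar\rho\in L^1_{loc}(\R^d)$ to it.

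\textbf{Compactness and the equation in the interior.} Fix $\theta$ and set $\bb^k:=\tilde\bb\star\theta^k$: this is a smooth, bounded, divergence-free vector field with $\|\bb^k\|_{L^\infty}\le\|\bb\|_{L^\infty}$, and its classical flow $\XX^k$ is, for every pair of times, a volume-preserving diffeomorphism of $\R^d$, so by \eqref{eqn_representation} the unique weak solution along $\bb^k$ is $\rho^k(t,\cdot)\Leb{d}=\XX^k(t,0,\cdot)_\#(\bar\rho\Leb{d})$. Volume preservation gives $\|\rho^k(t,\cdot)\|_{L^\infty}\le\|\bar\rho\|_{L^\infty}$, and since $\partial_t\rho^k=-\div(\bb^k\rho^k)$ with $\bb^k\rho^k$ bounded in $L^\infty$ uniformly in $k$, the maps $t\mapsto\rho^k(t,\cdot)$ are equicontinuous from $[0,+\infty)$ into $W^{-1,1}_{loc}(\R^d)$. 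By Arzel\`a--Ascoli and weak-$*$ compactness of balls in $L^\infty$, along a subsequence $\rho^k(t,\cdot)$ converges to some $\rho(t,\cdot)$, weakly-$*$ in $L^\infty_{loc}(\R^d)$ and uniformly for $t$ in compact intervals; in particular $\rho(0,\cdot)=\bar\rho$ and $\rho^k(\delta,\cdot)$ converges weakly-$*$ to $\rho(\delta,\cdot)$ for each $\delta\ge0$. Because $\bb\in L^1_{loc}((0,+\infty);L^1_{loc})$, the mollifications $\bb^k$ converge to $\bb$ in $L^1_{loc}((0,+\infty)\times\R^d)$ (on a set contained in $[\delta,T]\times B_R$ they only involve $\bb$ on $[\delta/2,T+1]\times B_{R+1}$ once $k$ is large); coupling strong $L^1_{loc}$ convergence of $\bb^k$ with weak-$*$ $L^\infty$ convergence of $\rho^k$, one passes to the limit and finds that $\rho$ solves $\partial_t\rho+\div(\bb\rho)=0$ in $\mathcal{D}'((0,+\infty)\times\R^d)$.

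\textbf{Strong attainment of the datum.} The crucial observation is that near $t=0$ the limit is close to $\bar\rho$ in a strong norm, with a rate independent of $k$. Writing $\rho^k(\delta,\cdot)=\bar\rho\circ\XX^k(0,\delta,\cdot)$, where $\XX^k(0,\delta,\cdot)$ is volume preserving and $\|\XX^k(0,\delta,\cdot)-\id\|_{L^\infty}\le\|\bb\|_{L^\infty}\delta$, approximation of $\bar\rho$ in $L^2(B_{R+1})$ by continuous functions together with the change of variables with unit Jacobian yields a modulus $\omega_R$, independent of $k$ and $\delta$, with $\|\rho^k(\delta,\cdot)-\bar\rho\|_{L^2(B_R)}\le\omega_R(\delta)$. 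Passing to the liminf in $k$ and using weak-$*$ lower semicontinuity of the $L^2(B_R)$-norm, $\|\rho(\delta,\cdot)-\bar\rho\|_{L^2(B_R)}\le\omega_R(\delta)\to0$ as $\delta\to0^+$; hence $\rho(\delta,\cdot)\to\bar\rho$ strongly in $L^2_{loc}(\R^d)$ as $\delta\to0^+$.

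\textbf{Identification of the limit by renormalisation.} If $\rho_1,\rho_2$ are two limits as above, coming from possibly different mollifiers, then $w:=\rho_1-\rho_2$ is a bounded distributional solution of $\partial_t w+\div(\bb w)=0$ in $(0,+\infty)\times\R^d$ with $w(\delta,\cdot)\to0$ in $L^2_{loc}$. As $\bb\in L^1_{loc}((0,+\infty);BV_{loc})$ is divergence-free, the renormalisation theory of \cite{A04} applies on $(0,+\infty)\times\R^d$, so $w^2$ is a nonnegative bounded distributional solution of the continuity equation there. Testing it, for $0<\delta<t$ and $R>0$, against the Lipschitz, compactly supported $\chi(s,x):=\zeta\big(|x|-R-\|\bb\|_{L^\infty}(t-s)\big)$ — with $\zeta$ nonincreasing, $\zeta\equiv1$ on $(-\infty,0]$ and $\zeta\equiv0$ on $[\eta,+\infty)$, so that $\partial_s\chi+\bb\cdot\nabla_x\chi\le0$ — and letting $\eta\to0$ gives the finite-speed inequality $\int_{B_R}w^2(t,\cdot)\,dx\le\int_{B_{R+\|\bb\|_{L^\infty}t}}w^2(\delta,\cdot)\,dx$. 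Sending $\delta\to0^+$, the right-hand side vanishes by the previous step, so $w\equiv0$ and $\rho_1=\rho_2$. Therefore $(\rho^k)_k$ has a single subsequential limit, $\rho^k\to\rho$ in $\mathcal{D}'((0,+\infty)\times\R^d)$ along the full sequence, and $\rho$ is independent of $\theta$; this proves the theorem for $\bar\rho\in L^\infty$.

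\textbf{General datum, and the main obstacle.} For $\bar\rho\in L^1_{loc}$ write $\bar\rho=\bar\rho_n+\bar r_n$ with $\bar\rho_n:=\bar\rho\,\mathbb{1}_{\{|\bar\rho|\le n\}}\in L^\infty$ and $\|\bar r_n\|_{L^1(B_R)}\to0$ for every $R$. By linearity of the pushforward $\rho^k=\rho^k_n+r^k_n$, with $\rho^k_n$ and $r^k_n$ the solutions along $\bb^k$ issued from $\bar\rho_n$ and $\bar r_n$; the previous steps give $\rho^k_n\to\rho_n$ in $\mathcal{D}'$ with $\rho_n$ independent of $\theta$, whereas $\|r^k_n(t,\cdot)\|_{L^1(B_R)}\le\|\bar r_n\|_{L^1(B_{R+\|\bb\|_{L^\infty}t})}$ is small uniformly in $k$; the same estimate applied to $\bar\rho_n-\bar\rho_m$ shows $(\rho_n)_n$ is Cauchy in $L^\infty_{loc}([0,+\infty);L^1_{loc})$, with limit $\rho\in L^1_{loc}$, and a three-$\varepsilon$ argument yields $\rho^k\to\rho$ in $\mathcal{D}'((0,+\infty)\times\R^d)$ for every $(\bb^k)_k\in\mathcal{R}_{conv}$, with $\rho$ independent of the mollifier. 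The main obstacle lies in the two middle steps: the $BV$ bound on $\bb$ holds only away from $t=0$, where non-uniqueness of bounded weak solutions genuinely occurs (Depauw's field \cite{Depauw}), so renormalisation only constrains $w$ on each slab $[\delta,+\infty)\times\R^d$; what lets the scheme select a solution is that the mollified flows are exact volume-preserving diffeomorphisms moving points at speed at most $\|\bb\|_{L^\infty}$, which forces the limit to attain its datum strongly in $L^2_{loc}$ as $t\to0^+$ — exactly the input needed to propagate the renormalised estimate down to the initial time.
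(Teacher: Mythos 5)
Your proof is correct, but it follows a genuinely different route from the paper's. Both arguments hinge on the same structural facts — mollification preserves $\div_x\bb^k=0$ and $\|\bb^k\|_{L^\infty}\le\|\bb\|_{L^\infty}$, so the regularised flows are measure-preserving and move points with speed at most $\|\bb\|_{L^\infty}$ — but they exploit them differently. The paper dualises: using the unit Jacobian it writes $\int\rho^k\phi\,dx\,ds=\int\bar\rho(x)\,\tilde\rho^k_{\phi(s,\cdot),s}(0,x)\,dx\,ds$ and reduces everything to uniqueness and stability for the \emph{backward} boundary value problem with datum at a time $s>0$ (Lemma \ref{lem_uniquenss_weak}): Ambrosio's theorem applies on $[\tau,+\infty)$ for every $\tau>0$, and the trace at $t=0$ is then forced by weak-star continuity in time; the $L^1_{loc}$ datum only ever appears paired against bounded duals, and compactness of $\rho^k$ is obtained via Dunford--Pettis. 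You instead stay with the forward problem: your selection mechanism is the uniform-in-$k$ strong $L^2_{loc}$ attainment of the initial datum by the mollified solutions (volume preservation plus finite speed), which you then propagate to positive times by renormalising $w^2$ on $(0,+\infty)$ and a finite-speed cutoff, handling $L^1_{loc}$ data by truncation. Each approach buys something: the paper's duality formula characterises the selected solution explicitly in terms of backward solutions and never needs to renormalise the forward limit near $t=0$; your proof shows in addition that, for bounded data, the selected solution attains its datum \emph{strongly} in $L^2_{loc}$ as $t\downarrow 0$ — precisely the property that Depauw's wild solutions violate — which comes close to the intrinsic characterisation the paper's Remark after Theorem \ref{thm_depauw} asks for, and it is closer in spirit to the DiPerna--Lions stability theory. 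Two routine points are glossed but harmless: identifying the time-continuous trace of $w^2$ at a.e.\ small time with the square of the trace of $w$ (work along a full-measure sequence $\delta_j\downarrow 0$), and justifying the Lipschitz cutoff in the weak formulation by mollifying the test function.
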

{ \begin{remark} Note that well-posedness of the Cauchy problem \eqref{eqPDE} along the regularisation class $\mathcal{R}_{conv}$ does not in general imply existence of a weak solution of \eqref{eqPDE}. When $1<p\leq +\infty$, and when the initial datum is in $L^p_{loc}(\R^d)$, weak limit points of weak solutions of \eqref{eqPDE} along a regularisation of $(\bb^k)_{k\in\N}$ by convolution are necessarly weak solutions of \eqref{eqPDE}. However, when the initial datum is in $L^1_{loc}(\R^d)$, we cannot directly pass into the limit in the weak formulation of \eqref{eqPDE} because $\bb^k$ need not converge strongly to $\bb$ in $L^\infty_{loc}((0,+\infty)\times\R^d)$.
	\end{remark}
}
{\begin{remark} We stress that our result holds for a locally integrable initial datum, whereas in Theorem~\ref{thm_ambrosio} due to Ambrosio, the initial datum needs to be essentially bounded. This is because the notion of well-posedness along the regularisation class $\mathcal{R}_{conv}$ is weaker than the notion of existence and uniqueness of weak solutions in the functional class $L^\infty_{loc}([0,+\infty);L^1_{loc}(\R^d))$. It is in fact a strictly weaker notion as the weak solutions constructed by convex integration \cite{ML18, MS20, MoSzRenormalized, BDLC20, pitcho2021nonuniqueness} cannot be constructed by regularisation of the vector field by convolution.
	\end{remark}
}
It is interesting to note that for non-linear wave equations with random and rough initial data \cite{Tzv19Randomdatawave,Tzv23nonlinearnoise}, the class of regularisations by convolution also plays an important role, although for different reasons than in this work.

The bounded, divergence-free vector field $\bb_{DP}:[0,+\infty)\times\R^2\to\R^2$ constructed by Depauw in \cite{Depauw} belongs to $L_{loc}^1((0,+\infty);BV_{loc}(\R^2;\R^2))$. The following variation on Depauw's non-uniqueness result shows that Theorem \ref{thm_uniqueness} provides a non-trivial selection for bounded initial data, { even amongst solutions which have the same sign as the initial datum.}
\begin{theorem}\label{thm_depauw} 
	Consider an initial datum $\bar\rho\in L^\infty(\R^d)$. Then, there exists at least two bounded weak solutions of the Cauchy problem \eqref{eqPDE} along $\bb_{DP}$. {Moreover, if $\bar\rho$ is non-negative and $\bar\rho\neq 0$, then there exists at least two non-negative bounded weak solutions of the Cauchy problem \eqref{eqPDE} along $\bb_{DP}$.}
\end{theorem}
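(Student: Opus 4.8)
The plan is to build on Depauw's original construction, which produces a bounded, divergence-free vector field $\bb_{DP}$ on $[0,+\infty)\times\R^2$ together with a nontrivial bounded weak solution of the transport equation that vanishes at time $0$ and at time $1$ but is nonzero at intermediate times; this is the mechanism behind the failure of uniqueness. First I would recall the self-similar, dyadic ``mixing'' structure of $\bb_{DP}$: on each time interval $[2^{-n-1},2^{-n}]$ the field acts by shear flows on a grid of squares of side $2^{-n}$, and the associated solution $u_{DP}$ is a checkerboard function on scale $2^{-n}$ that is assembled so that $u_{DP}(t,\cdot)\to 0$ weakly-$*$ as $t\to 0^+$, while $u_{DP}(1,\cdot)$ is (say) identically zero and $u_{DP}$ is bounded by $1$. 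Reversing time in Depauw's construction gives a field that, started from the zero datum, produces a nonzero solution; concatenating forward and reversed copies yields the standard two-solutions-from-one-datum phenomenon on $[0,1]$. Extending the field by $0$ (or by a rigid shear) for $t\geq 1$ keeps everything bounded, divergence-free, and in $L^1_{loc}((0,+\infty);BV_{loc})$ — crucially not in $L^1_{loc}([0,+\infty);BV_{loc})$, since the $BV$ norm of $\bb_{DP}(t,\cdot)$ blows up like $2^{n}$ as $t\to 0^+$ and this blow-up is not integrable near $t=0$, which is exactly why Theorem~\ref{thm_uniqueness_bv} does not apply while Theorem~\ref{thm_uniqueness} does.

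For the first assertion (two bounded weak solutions for \emph{any} $\bar\rho\in L^\infty(\R^d)$), I would take $\rho_1$ to be the solution transported by the (extended) flow of $\bb_{DP}$ — i.e. the Lagrangian/DiPerna–Lions solution on the region where the field is smooth enough — and set $\rho_2 := \rho_1 + u_{DP}$, where $u_{DP}$ is Depauw's anomalous solution with zero initial datum (built on $[0,1]$ and extended by its value at $t=1$, which we arrange to be zero, so that it can be extended by $0$ for $t\ge 1$). Both are bounded weak solutions: $\rho_1$ by direct verification against test functions (using the explicit piecewise-shear structure), $u_{DP}$ by Depauw's computation, and the equation is linear so the sum solves it with the same datum $\bar\rho$. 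They differ because $u_{DP}\not\equiv 0$. Here one must be slightly careful that adding $u_{DP}$ does not change the initial trace: since $u_{DP}(t,\cdot)\rightharpoonup 0$ and the weak formulation only sees the datum through $\int \bar\rho\,\phi(0,\cdot)$, the trace of $\rho_2$ at $t=0$ is still $\bar\rho$; verifying this against the distributional formulation in Definition~\ref{defPDE} is routine but should be written out.

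For the second assertion (two \emph{non-negative} solutions when $\bar\rho\ge 0$, $\bar\rho\ne 0$), the subtlety is that $\rho_1+u_{DP}$ need not stay non-negative since $u_{DP}$ takes both signs. The fix is a scaling/localisation argument: first treat $\bar\rho\equiv c$ for a constant $c>0$. Then $\rho_1\equiv c$ (constant is transported to constant by the divergence-free flow), and $u_{DP}$ is bounded by $\|u_{DP}\|_\infty =: M$; so $\rho_2 := c + \varepsilon u_{DP}$ is non-negative once $\varepsilon \le c/M$, and it is still a weak solution with datum $c$ by linearity, and still $\not\equiv c$. For general non-negative $\bar\rho\ne 0$, I would localise: since $\bar\rho\ne 0$ there is a cube $Q$ and $c>0$ with $\bar\rho\ge c$ on $Q$; because the relevant checkerboard functions in Depauw's scheme are supported on the unit square and are built from rescalings, one can insert a rescaled/translated copy $v$ of $\varepsilon u_{DP}$ whose spatial support at each time lies inside the image under the flow of (a subcube of) $Q$, with $\|v\|_\infty$ as small as desired, so that $\rho_1 + v \ge 0$ everywhere, solves \eqref{eqPDE} with datum $\bar\rho$, and is distinct from $\rho_1$.

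The main obstacle I anticipate is this last non-negativity/localisation step: one needs to check that a rescaled copy of Depauw's anomalous solution can be glued into the flow picture of $\bb_{DP}$ so as to (i) remain a genuine weak solution along $\bb_{DP}$ itself (not along some rescaled field), (ii) have controllably small sup norm, and (iii) be supported, at every time, inside a region where $\rho_1$ is bounded below by a positive constant. Depauw's construction is exactly self-similar under the parabolic rescaling that also leaves $\bb_{DP}$ invariant on dyadic sub-blocks, so such a copy does exist; the care is in bookkeeping the time intervals and the spatial squares, and in confirming that the extension by $0$ at $t=1$ (needed so the glued solution is defined for all $t\ge 0$) is consistent with $\bb_{DP}$ being $0$, or a fixed shear, there. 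Everything else — boundedness, the divergence-free condition, membership in $L^1_{loc}((0,+\infty);BV_{loc})$, and linearity of the equation — is immediate from Depauw's construction and the definitions.
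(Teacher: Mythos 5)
Your first assertion is fine and is essentially the observation the paper itself attributes to a referee: take any bounded weak solution $\rho_1$ with datum $\bar\rho$ (which exists by weak compactness --- note that you cannot literally ``transport $\bar\rho$ by the flow'', since $\bb_{DP}$ admits no well-defined flow from $t=0$; this is the whole point of the example) and add Depauw's anomalous solution $\zeta_1-\zeta_2$ with zero initial trace. Linearity and the distributional definition of the initial datum make $\rho_1+(\zeta_1-\zeta_2)$ a second solution. The constant-datum case of your second assertion ($\bar\rho\equiv c$, perturb by $\varepsilon(\zeta_1-\zeta_2)$ with $\varepsilon\le c$) is also correct.

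The genuine gap is the localisation step for a general non-negative $\bar\rho$. Your plan requires two things that do not follow from anything you have set up. First, you need a spatially localised anomalous solution $v$ along $\bb_{DP}$ itself with zero initial trace; but the anomaly is produced by the infinite cascade of mixing as $t\to0^+$, during which mass is exchanged between adjacent dyadic squares at every scale (the squares of $\mathcal S^1_j$ are permuted \emph{across} the squares of $\mathcal S^2_j$), so no cube is invariant and ``inserting a rescaled copy of $u_{DP}$ supported inside the image of a subcube of $Q$'' is not a construction you can actually carry out without a substantial new argument. Second, and more fundamentally, you need the pointwise bound $\rho_1\ge|v|$ on $\{v<0\}$ for all small $t>0$, i.e.\ a uniform positive lower bound for $\rho_1(t,\cdot)$ on an open space-time region abutting $t=0$. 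There is no maximum (or minimum) principle for weak solutions along $\bb_{DP}$: the paper's own $\zeta_1$ has initial datum $1/2$ yet takes values in $\{0,1\}$ for every $t>0$, so its essential infimum on every open set drops to $0$ instantly. Knowing $\bar\rho\ge c$ on $Q$ therefore gives you no lower bound on $\rho_1$ at positive times, and the inequality $\rho_1+v\ge0$ cannot be checked. This is exactly the difficulty the paper flags in the remark after Theorem \ref{thm_depauw}.

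The paper circumvents this by building \emph{both} solutions from scratch rather than perturbing one. It approximates $\bar\rho$ weakly by non-negative data $\bar\rho_i^k$ supported in $\{\zeta_i(2^{-k-1},\cdot)=1\}$ (Lemma \ref{lem_exist_approx_sequence}, using that $\zeta_i$ has average $1/2$ on every dyadic square), solves uniquely along the time-truncated fields $\bb_{DP}^k$ via Theorem \ref{thm_uniqueness_bv}, and passes to weak-star limits $\rho_1,\rho_2$. Non-negativity is then automatic (uniqueness for the truncated problems preserves sign, and sign passes to weak limits), and the comparison $0\le\rho_i\le M\zeta_i$ forces $\rho_1$ and $\rho_2$ to live on the disjoint supports of $\zeta_1$ and $\zeta_2$, hence to be distinct once $\bar\rho\ne0$. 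If you want to keep your additive strategy you would have to supply an argument of comparable strength for the lower bound on $\rho_1$; as written, the non-negativity half of your proof does not go through.
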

{\begin{remark} Given an arbitrary bounded initial datum, a bounded weak solution of \eqref{eqPDE} along $\bb_{DP}$ always exists by weak compactness. Adding the non-zero solution with zero initial datum given by Depauw constructs another bounded weak solution with the same initial datum. This was pointed out by one of the anonymous referees. However, even if the initial datum is non-negative, the second solution constructed by this procedure need not be non-negative. Some additional work is required to prove that, when the initial datum is non-negative, there exist two non-negative bounded weak solutions.
	\end{remark}}
\begin{remark}
	It would be interesting to establish a characterising property intrinsic to the weak solution of \eqref{eqPDE} along $\bb_{DP}$ selected by Theorem \ref{thm_uniqueness}. We moreover expect that the weak solutions constructed in Theorem \ref{thm_depauw} are distinct from the weak solution selected by Theorem \ref{thm_uniqueness}. 
\end{remark}

\subsection{Outline of ideas} 
\subsubsection{Ideas for Theorem \ref{thm_uniqueness}}
The convergence of weak solutions of \eqref{eqPDE} along a regularisation $(\bb^k)_{k\in\N}$ by convolution will be controlled by the convergence of solutions to a family backwards problem along $(\bb^k)_{k\in\N}$ with final datum given by a test function. The convergence of the solutions to the backwards problem will be established in Lemma \ref{lem_uniquenss_weak} and shown to be \emph{pointwise in time}. In order to get uniform in $k$ control of the Jacobian of the flow along $\bb^k$, we will crucially use that regularisation by convolution preserves the divergence-free structure. This will conclude well-posedness of the Cauchy problem \eqref{eqPDE} along the regularisation class $\mathcal{R}_{conv}$. 
\subsubsection{Ideas for Theorem \ref{thm_depauw}}
The classical construction of the vector field of Depauw $\bb_{DP}$ gives two non-unique bounded weak solutions $\zeta_1$ and $\zeta_2$ of \eqref{eqPDE} along $\bb_{DP}$. 
We will observe that any initial datum $\bar\rho$ can be weakly approximated by a sequence $\bar\rho_1^k$ localised on $\zeta_1(2^{-k-1},\cdot)$ and by a sequence $\bar\rho_2^k$ localised on $\zeta_2(2^{-k-1},\cdot)$. Correspondingly, we have unique bounded weak solutions $\rho_1^k$ and $\rho_2^k$ of \eqref{eqPDE} along $\bb_{DP}$ truncated up to time $2^{-k-1}$ and with initial datum $\bar\rho_1^k$ and $\bar\rho_2^k$ respectively thanks to Theorem \ref{thm_uniqueness_bv}. Weak limit points of the sequences $(\rho_1^k)_{k\in\N}$ and $(\rho_2^k)_{k\in\N}$ are then proven to be distinct bounded weak solutions of \eqref{eqPDE} along $\bb_{DP}$ with initial datum $\bar\rho$. 
\subsection{Plan of the paper} 
In Section \ref{section_boundary_value}, we introduce a boundary value problem. We prove that bounded weak solutions of this boundary value problem have a unique representative in $C(\R;w^*-L^\infty(\R^d))$. In Section \ref{section_theorem_uniq}, we prove Theorem \ref{thm_uniqueness} by using the work on the boundary value problem. In Section \ref{section_proof_depauw}, we give the classical construction of the vector field of Depauw \cite{Depauw} and record some properties of it. We then prove Theorem \ref{thm_depauw}. In the Appendix, we prove Proposition \ref{prop_well_posed}. 
\subsection*{Acknowledgements}
The author has been supported by the Simons foundation grant ID: 651475.
The author is thankful to his advisor Nikolay Tzvetkov for discussions, which have in particular inspired Definition \ref{defn_well_posedness_reg}, for his support, and for comments which have improved this manuscript.  The author is thankful to Lucas Huysmans for discussions and for pointing out a mistake. The author acknowledges the hospitality of the Pitcho Centre for Scientific Studies where this work was partly done. {The author thanks the anonymous referees for numerous suggestions, which have improved this paper.}

\bigskip 
\section{The boundary value problem} \label{section_boundary_value}
\subsection{Definitions}
 For $s\in \R$, consider the boundary value problem posed on $\R\times\R^d$,
\begin{equation}\label{eqn_boundary_value} \tag{BVP}
\left\{
\begin{split} 
\partial_t \rho +\div_x(\bb \rho)&=0 ,\\
\rho(s,x)&=\bar\rho(x). 
\end{split} 
\right. 
\end{equation}
We will work with bounded weak solutions of \eqref{eqn_boundary_value}. 
\begin{definition}\label{def_boundary_value}
	Consider a bounded vector field $\bb:\R\times\R^d\to\R^d$, a boundary datum $\bar\rho\in L^\infty(\R^d)$, and a closed interval $I\subset \R$.
	We say that $\rho\in L^\infty(I\times\R^d)$ is a bounded weak solution on $I$ to \eqref{eqn_boundary_value} along $\bb$, if for every $\phi \in C_c^\infty(I\times\mathbb{R}^d)$
	\begin{equation*}
\int_s^{+\infty}\int_{\mathbb{R}^d} \rho\left(\frac{\partial \phi}{\partial t} + {\bb}\cdot\nabla_x \phi\right) \; dxdt=-\int_{-\infty}^s\int_{\mathbb{R}^d} \rho\left(\frac{\partial \phi}{\partial t} + {\bb}\cdot\nabla_x \phi\right) \; dxdt = - \int_{\mathbb{R}^d} \bar\rho(x) \phi(s,x) \; dx.
	\end{equation*}
\end{definition}
For bounded weak solutions of the boundary value problem \eqref{eqn_boundary_value} on $\R$, we omit to specify the time interval. We record the following existence theorem for \eqref{eqn_boundary_value}. 

\begin{theorem}\label{thm_existence_boundary_value}
	Consider a bounded, divergence-free vector field $\bb:\R\times\R^d\to\R^d$, and an intial datum $\bar\rho\in L^\infty(\R^d)$. Then, there exists a bounded weak solution $\rho$ of \eqref{eqn_boundary_value} along $\bb$ satisfying $\|\rho\|_{L^\infty_{t,x}}\leq \|\bar\rho\|_{L^\infty_x}$. 
\end{theorem}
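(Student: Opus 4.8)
The plan is to obtain the bounded weak solution of \eqref{eqn_boundary_value} as a limit of solutions along smooth approximations, exploiting that regularisation by convolution preserves the divergence-free condition, which is what gives the uniform $L^\infty$ bound. First I would fix a standard mollifier $\theta$ and set $\bb^k := \bb \star \theta^k$; since convolution commutes with the divergence, each $\bb^k$ is a bounded, smooth, divergence-free vector field on $\R \times \R^d$, with $\|\bb^k\|_{L^\infty} \le \|\bb\|_{L^\infty}$, and $\bb^k \to \bb$ in $L^1_{loc}$. For the smooth field $\bb^k$ the classical theory applies: there is a flow $\XX^k(t,s,\cdot)$, and because $\div_x \bb^k = 0$ the Jacobian of $\XX^k(t,s,\cdot)$ is identically $1$, so $\XX^k(t,s,\cdot)_\# \Leb{d} = \Leb{d}$. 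The unique weak solution of \eqref{eqn_boundary_value} along $\bb^k$ with boundary datum $\bar\rho$ at time $s$ is then given by the representation formula $\rho^k(t,\cdot) = \bar\rho \circ \XX^k(s,t,\cdot)$ (equivalently $\rho^k(t,\cdot)\Leb{d} = \XX^k(t,s,\cdot)_\# (\bar\rho\, \Leb{d})$), and the measure-preserving property yields $\|\rho^k(t,\cdot)\|_{L^\infty_x} = \|\bar\rho\|_{L^\infty_x}$ for every $t$; in particular $\|\rho^k\|_{L^\infty_{t,x}} \le \|\bar\rho\|_{L^\infty_x}$, uniformly in $k$.

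Next I would extract a subsequence. The sequence $(\rho^k)_{k\in\N}$ is bounded in $L^\infty(\R\times\R^d) \cong (L^1(\R\times\R^d))^*$ — more precisely bounded in $L^\infty_{loc}$ and on each bounded set — so by the Banach–Alaoglu theorem (sequential weak-$*$ compactness on the separable predual $L^1_{loc}$) there is a subsequence and a limit $\rho \in L^\infty(\R\times\R^d)$ with $\rho^k \rightharpoonup^* \rho$ and $\|\rho\|_{L^\infty_{t,x}} \le \liminf_k \|\rho^k\|_{L^\infty_{t,x}} \le \|\bar\rho\|_{L^\infty_x}$ by lower semicontinuity of the norm.

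Then I would pass to the limit in the weak formulation. Fix $\phi \in C_c^\infty(\R\times\R^d)$. Each $\rho^k$ satisfies
\begin{equation*}
\int_\R\int_{\R^d} \rho^k\Big(\frac{\partial \phi}{\partial t} + \bb^k \cdot \nabla_x \phi\Big)\, dx\, dt = -\int_{\R^d} \bar\rho(x)\phi(s,x)\, dx.
\end{equation*}
The term $\int \rho^k \partial_t \phi$ converges to $\int \rho \partial_t\phi$ by weak-$*$ convergence against the fixed $L^1$ function $\partial_t\phi$. For the transport term, write $\rho^k \bb^k \cdot \nabla_x\phi = \rho^k \bb \cdot \nabla_x\phi + \rho^k(\bb^k - \bb)\cdot\nabla_x\phi$; the first piece converges to $\int \rho\, \bb\cdot\nabla_x\phi$ since $\bb\cdot\nabla_x\phi \in L^1$, and the second is bounded by $\|\bar\rho\|_{L^\infty}\|\nabla_x\phi\|_{L^\infty}\|\bb^k - \bb\|_{L^1(\supp\phi)} \to 0$. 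Hence $\rho$ satisfies the weak formulation of \eqref{eqn_boundary_value} on every closed interval $I$ containing $\supp\phi$, and the $L^\infty$ bound is the one claimed.

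The main obstacle is a genuinely minor one: carefully justifying the representation formula $\rho^k(t,\cdot) = \bar\rho\circ\XX^k(s,t,\cdot)$ together with the identity $\XX^k(t,s,\cdot)_\#\Leb{d} = \Leb{d}$ — i.e.\ that smoothness plus $\div_x \bb^k = 0$ really gives a volume-preserving flow defined globally in time (boundedness of $\bb^k$ rules out blow-up), so that the $L^\infty$ norm of $\rho^k$ is exactly preserved. All of this is classical, and once it is in place the compactness and limiting arguments are routine. One should also note the slight subtlety that the boundary datum is imposed at the \emph{interior} time $s$ rather than at an endpoint, but the representation formula and the weak formulation in Definition \ref{def_boundary_value} handle both sides of $s$ symmetrically, so no extra argument is needed.
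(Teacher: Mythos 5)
Your proposal is correct and follows essentially the same route as the paper: mollify $\bb$ to get divergence-free smooth fields, use the volume-preserving flow representation to obtain the uniform bound $\|\rho^k\|_{L^\infty_{t,x}}=\|\bar\rho\|_{L^\infty_x}$, extract a weak-star limit via Banach--Alaoglu, and pass to the limit in the weak formulation using $\bb^k\to\bb$ in $L^1_{loc}$. The only difference is that you spell out the splitting of the transport term, which the paper leaves implicit.
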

\begin{proof} 
 Let $\theta\in C^\infty_c(\R\times\R^d)$ be a standard mollifier, and let $\bb^k=\bb\star\theta^k$ be a regularisation of $\bb$. Let $\XX^k$ be the unique flow of $\bb^k$. Then, the unique weak solution $\rho^k$ of \eqref{eqn_boundary_value} along $\bb^k$ is given by : 
\begin{equation*}
\rho^k(t,x)\Leb{d}=\XX^k(t,s,x)_\#\bar\rho\Leb{d}.
\end{equation*}
Moreover, $\|\rho^k\|_{L^\infty_{t,x}}=\|\bar\rho\|_{L^\infty_x}$ because $\XX^k(t,s,\cdot)_\#\Leb{d}=\Leb{d}$. Therefore, by the Banach-Alaoglou Theorem, there is an increasing map $\psi:\N\to\N$ such that  $\rho^{\psi(k)}$ converges weak-star in $L^\infty(\R\times\R^d)$ to some $\rho$ as $k\to+\infty$, and $\|\rho\|_{L^\infty_{t.x}}\leq \|\bar\rho\|_{L^\infty}$ by weak lower semicontinuity of the norm. Since $\bb^k\to\bb$ strongly in $L^1_{loc}$, it follows that $\rho$ is a bounded weak solution of \eqref{eqn_a_priori_bound} along $\bb$. 
\end{proof} 

\subsection{Time continuous representative}
It is a standard fact that a bounded weak solution of \eqref{eqn_boundary_value}, although only in { $L^\infty(\R\times\R^d)$} always has a representative in $C(\R;w^*-L^\infty(\R^d))$, so we can take traces in time. This is recorded in the following lemma. 
\begin{lemma}\label{lem_given_weak_sol_exist_cont_repr}
	Consider a bounded vector field $\bb:\R\times\R^d\to\R^d$, a boundary datum $\bar\rho\in L^\infty(\R^d)$, a bounded weak solution $\rho$ of \eqref{eqn_boundary_value} along $\bb$, and a compact time interval $I\subset \R$ with non-empty interior. Then, $\rho$ admits a unique representative $\tilde\rho$ in $C(\R;w^*-L^\infty(\R^d))$ for which it holds that :
	\begin{enumerate} 
		\item $\sup_{t\in I}\|\tilde\rho(t,\cdot)\|_{L^\infty_x}=\|\rho\|_{L^\infty(I;L^\infty_x)};$
	\item for every $\phi\in C^1_c(\R^d)$, there exists a real constant $L_{\phi,I}>0$ such that for a.e. $t\in I$
		\begin{equation}\label{eqn_a_priori_bound}
	\Big|\frac{d}{dt}\int_{\R^d}\tilde\rho(t,x)\phi(x)dx\Big|\leq L_{\phi,I}\|\bb\|_{L^\infty(I\times\R^d)},
	\end{equation}
	in particular, if $\|\bb\|_{L^\infty(I\times\R^d)}=0$, then $\tilde\rho(t,\cdot)=\tilde\rho(s,\cdot)$ for every $t,s\in I$. 
	\end{enumerate} 
\end{lemma}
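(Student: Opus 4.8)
\textbf{Proof proposal for Lemma \ref{lem_given_weak_sol_exist_cont_repr}.}

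The plan is to extract the time-continuous representative from the weak formulation of \eqref{eqn_boundary_value}. First I would fix a countable set $\mathcal{D}\subset C_c^1(\R^d)$ that is dense (in the $C^1$-topology on each fixed compact, say) and such that $\int_{\R^d}\rho(t,x)\phi(x)\,dx$ is determined for all test functions by its values on $\mathcal{D}$. For a fixed $\phi\in C_c^1(\R^d)$, plugging test functions of the form $\psi(t)\phi(x)$ (with $\psi\in C_c^\infty(\mathring I)$) into the weak formulation shows that the function
\[
g_\phi(t):=\int_{\R^d}\rho(t,x)\phi(x)\,dx
\]
has distributional derivative on $\mathring I$ given by $t\mapsto \int_{\R^d}\rho(t,x)\,\bb(t,x)\cdot\nabla\phi(x)\,dx$, which lies in $L^\infty(I)$ with norm bounded by $\|\rho\|_{L^\infty(I\times\R^d)}\,\|\nabla\phi\|_{L^1}\,\|\bb\|_{L^\infty(I\times\R^d)}$. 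Hence $g_\phi$ agrees a.e.\ on $I$ with a Lipschitz function $\tilde g_\phi$, with Lipschitz constant $L_{\phi,I}\|\bb\|_{L^\infty(I\times\R^d)}$ where one may take $L_{\phi,I}:=\|\nabla\phi\|_{L^1}\cdot\|\rho\|_{L^\infty(\R\times\R^d)}$ (using global boundedness of $\rho$; since $I$ is arbitrary this gives the bound on all of $\R$). This already yields item (ii), once the representative is constructed, and the final assertion (constancy when $\|\bb\|_{L^\infty(I\times\R^d)}=0$) is immediate since then each $\tilde g_\phi$ is constant on $I$.

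Next I would assemble these scalar facts into a genuine $w^*\!-\!L^\infty$-valued continuous curve. Running the above over all $\phi\in\mathcal D$ simultaneously, there is a single null set $N\subset\R$ outside which $g_\phi(t)=\tilde g_\phi(t)$ for every $\phi\in\mathcal D$; moreover $\{\tilde g_\phi\}_{\phi\in\mathcal D}$ are all continuous. For $t\notin N$ set $\tilde\rho(t,\cdot):=\rho(t,\cdot)$; I claim the map $t\mapsto \tilde\rho(t,\cdot)$, a priori only defined off $N$, extends uniquely to a $w^*$-continuous map on all of $\R$. Indeed, for $t\notin N$ and $\phi\in\mathcal D$ we have $\langle\tilde\rho(t),\phi\rangle=\tilde g_\phi(t)$, and since the $\tilde g_\phi$ are (Lipschitz on compacts, hence) uniformly continuous and $\mathcal D$ is dense while $\sup_{t\notin N}\|\tilde\rho(t)\|_{L^\infty}\le\|\rho\|_{L^\infty(\R\times\R^d)}$, a standard equicontinuity/density argument shows that for any $t_0\in\R$ and any sequence $t_n\to t_0$ with $t_n\notin N$, the functionals $\tilde\rho(t_n)$ are $w^*$-Cauchy; their limit is independent of the sequence (compare values on $\mathcal D$) and defines $\tilde\rho(t_0)$. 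One checks $\langle\tilde\rho(t),\phi\rangle=\tilde g_\phi(t)$ for \emph{all} $t\in\R$ and $\phi\in\mathcal D$, hence for all $\phi\in C_c^1(\R^d)$ by density and for all $\phi\in L^1(\R^d)$ by a further density argument (using the uniform $L^\infty$ bound); thus $t\mapsto\tilde\rho(t,\cdot)$ is $w^*$-continuous. It equals $\rho$ for a.e.\ $t$, so it is a representative of $\rho$.

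Finally I would verify item (i) and uniqueness. For (i): for every $t\in\R$, $\|\tilde\rho(t,\cdot)\|_{L^\infty_x}=\sup\{\langle\tilde\rho(t),\phi\rangle : \phi\in L^1, \|\phi\|_{L^1}\le1\}$, and by $w^*$-continuity this sup, over $t\in I$, is a sup of limits of the corresponding sups at a.e.\ $t$, giving $\sup_{t\in I}\|\tilde\rho(t,\cdot)\|_{L^\infty_x}\le\|\rho\|_{L^\infty(I;L^\infty_x)}$; the reverse inequality holds because $\tilde\rho(t,\cdot)=\rho(t,\cdot)$ for a.e.\ $t\in I$, and (using $I$ has non-empty interior so a.e.\ $t\in I$ is a point where equality holds) these a.e.-values are recovered in the sup. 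For uniqueness: if $\tilde\rho_1,\tilde\rho_2$ are two $C(\R;w^*\!-\!L^\infty)$ representatives of $\rho$, then for each fixed $\phi$ the continuous functions $t\mapsto\langle\tilde\rho_i(t),\phi\rangle$ agree a.e., hence everywhere; since this holds for all $\phi$ in a determining set, $\tilde\rho_1(t)=\tilde\rho_2(t)$ for all $t$.

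The main obstacle is the gluing step in the second paragraph: one must be careful that the exceptional null set $N$ can be chosen independently of $\phi$ (this forces the restriction to a countable dense $\mathcal D$, with density upgraded afterwards using the uniform $L^\infty$ bound), and that $w^*$-continuity is genuinely obtained rather than merely sequential continuity along $N^c$ — this is where the equicontinuity coming from the uniform Lipschitz bounds on the $\tilde g_\phi$ does the work. Everything else is bookkeeping with the weak formulation and standard functional-analytic density arguments.
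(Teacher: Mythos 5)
Your proposal is correct and follows essentially the same route as the paper: derive the uniform Lipschitz bounds on $t\mapsto\int_{\R^d}\rho(t,x)\phi(x)\,dx$ from the weak formulation, restrict to a countable dense subset of $C^1_c(\R^d)$ to obtain a common full-measure set of times, and then extend to all $t\in\R$ and all $\phi\in L^1(\R^d)$ by equicontinuity, density and the duality $(L^1)^*\cong L^\infty$. The only cosmetic difference is that the paper packages the extension as a continuous linear operator $\mathcal{N}\to C(\R;\R)$ extended to $L^1(\R^d)$, whereas you extend pointwise in time via $w^*$-limits along sequences from the good set; these are the same argument.
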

We shall call the unique representative in $C(\R;w^*-L^\infty(\R^d))$ of a bounded weak solution $\rho$ of \eqref{eqn_boundary_value}, \emph{the time continuous representative} of $\rho$.
\begin{proof}

	STEP 1 (A-priori bounds): Choose a representative of $\rho$ in $L^\infty(\R\times\R^d)$. As $\rho$ is a bounded weak solution of \eqref{eqn_boundary_value} along $\bb$, it holds that in $L^1_{loc}(\R)$:
	\begin{equation}
	\frac{d}{dt}\int_{\R^d}\rho(t,x)\phi(x)dx=\int_{\R^d}\rho(t,x)\bb(t,x)\cdot \nabla_x\phi(x)dx.
	\end{equation}
	Let $J\subset \R$ be a compact time interval.
	For every $\phi\in C^1_c(\R^d)$, there exists a real constant $L_{\phi,J}>0$ such that for a.e. $t\in J$ 
	\begin{equation}\label{eqn_a_priori_bound_comput} 
	\begin{split} 
	\Big|\frac{d}{dt}\int_{\R^d}\rho(t,x)\phi(x)dx\Big|&\leq\int_{\R^d}\big|\rho(t,x)\bb(t,x)\cdot\nabla_x\phi(x)\big|dx,\\
	&\leq \|\nabla_x\phi\|_{L_x^1}\|\rho\|_{L^\infty(J;L_x^\infty)}\|\bb\|_{L^\infty(J\times\R^d)}=L_{\phi,J}\|\bb\|_{L^\infty(J\times\R^d)}\\
	\end{split} 
	\end{equation}
	Therefore, the function $$\R\ni t\mapsto \int_{\R^d}\rho(t,x)\phi(x)dx,$$  admits a Lipschitz continuous representative with Lipschitz constant bounded by $L_{\phi,J}$ on $J$ by \cite[Theorem 8.2]{BrezisFunctAnal11} . 
	
		Consider the set $S_\phi\subset \R$, on which the above function coincides with its Lipschitz continuous representative. Note that $S_\phi$  depends on which representative of $\rho$ is chosen, and that $S_\phi$ has full measure by \cite[Theorem 8.2]{BrezisFunctAnal11}. 	Let $\mathcal{N}$ be a countable subset of $C_c^1(\R^d)$ dense in $L^1(\R^d)$, which exists by separability. Define $$S_\mathcal{N}=\bigcap_{\phi\in\mathcal{N}}S_\phi.$$ Notice that this set has full measure in $\R$, and is therefore also dense in $\R$. 
	For every $\phi\in\mathcal{N}$, define the function $f_\phi:\R\to\R$ as the unique continuous extension to $\R$ of the function defined for $t\in \mathcal{S}_\mathcal{N}$ by 
	\begin{equation} \label{eqn_fk_phi_on_S_N}
	f_\phi(t)=\int_{\R^d}\rho(t,x)\phi(x)dx.
	\end{equation} 
	We then have for every $\phi\in \mathcal{N}$
	\begin{equation}\label{eqn_unif_bounds} 
	\sup_{ t\in J}|f_\phi(t)|\leq\|\rho\|_{L^\infty(J;L_x^\infty)}\|\phi\|_{L_x^1}. 
	\end{equation}

	\bigskip 
	STEP 2 (Choosing a representative): 
	We now seek a representative $\tilde\rho$ in $C(\R;w^*-L^\infty(\R^d))$ of $\rho$. 
	Equip $C(\R;\R)$ with the topology of uniform convergence on compact time intervals, and equip $\mathcal{N}$ with the topology induced from $L^1(\R^d)$. 
	Consider the linear operator 
	\begin{equation}
	\Lambda:\mathcal{N}\ni \phi \mapsto f_\phi\in C(\R;\R). 
	\end{equation} 
	Note that by \eqref{eqn_unif_bounds}, it is a continuous linear operator.
By density of $\mathcal{N}$ in $L^1(\R^d)$ and by \eqref{eqn_unif_bounds}, $\Lambda$ admits a unique continuous extension $\tilde\Lambda$ to $L^1(\R^d)$, which satisfies 
\begin{equation}\label{eqn_tilde_lambda_norm_compact_time}
\sup_{t\in J}\big|[\tilde\Lambda(\phi)](t)\big|\leq \|\rho\|_{L^\infty (J;L^\infty_x)}\|\phi\|_{L^1_x}.
\end{equation}
 Equipping $(L^1(\R^d))^*$  with the weak-star topology, we can then identify $\tilde\Lambda$ with a family of linear functionals $$\tilde\Lambda: \R\ni t \mapsto \tilde\Lambda_t\in (L^1(\R^d))^*,$$ 
 such that for every $t\in\R$ $$\tilde\Lambda_t:L^1(\R^d)\ni\phi\mapsto[\tilde\Lambda(\phi)](t)\in \R.$$ 
 Thus, by the duality $(L^1(\R^d))^*\cong L^\infty(\R^d)$, for every $t\in\R$ we have that $\tilde\Lambda_t$ can be uniquely identified with an element $\tilde\rho(t,\cdot)$ in $L^\infty(\R^d)$.  
 Let us check that $\tilde\rho$ is in $C(\R;w^*-L^\infty(\R^d))$. The weak-star topology on $L^\infty(\R^d)$ is induced by the family of seminorms defined by $p_\phi(f)=|\int_{\R^d}\phi(x)f(x)dx|$ for every $\phi \in L^1(\R^d)$ and every $f\in L^\infty(\R^d)$. For every $\phi\in L^1(\R^d)$ and every $t_1,t_2\in\R$, we then have 
 \begin{equation}
 \begin{split} 
 \Big|p_\phi\Big(\tilde\rho(t_1,\cdot)-\tilde\rho(t_2,\cdot)\Big)\Big|&=\Big|\int_{\R^d}\phi(x)\tilde\rho(t_1,x)dx-\int_{\R^d}\phi(x)\tilde\rho(t_2,x)dx\Big|\\
 &=\Big|\tilde\Lambda_{t_1}(\phi)-\tilde\Lambda_{t_2}(\phi)\Big|\\&=\Big|[\tilde\Lambda(\phi)](t_1)-[\tilde\Lambda(\phi)](t_2)\Big|,
 \end{split} 
 \end{equation}
 which can be made arbitrarly small by continuity of $\tilde\Lambda(\phi)$ and by taking $|t_1-t_2|$ sufficiently small.
Therefore, $\tilde\rho \in C(\R;w^*-L^\infty(\R^d))$, and
 \begin{equation}\label{eqn_lambda_rho_1}
 \tilde\Lambda_t(\phi)=\int_{\R^d}\tilde\rho(t,x)\phi(x)dx\qquad\forall\phi\in L^1(\R^d).
 \end{equation}
 Notice also that by construction, $\tilde\Lambda_t$ is the unique continuous extension of the linear functional $$\Lambda_t:\mathcal{N}\ni\phi\mapsto f_\phi(t)\in\R.$$
So, for every $t\in \mathcal{S}_\mathcal{N}$ we have 
	\begin{equation}
	\tilde\Lambda_t(\phi)=\int_{\R^d}\rho(t,x)\phi(x)dx \qquad\forall \phi\in \mathcal{N}. 
	\end{equation}
Together with \eqref{eqn_lambda_rho_1}, the above equation implies by density of $\mathcal{N}$ in $L^1(\R^d)$ that for every $t\in \mathcal{S}_\mathcal{N}$ it holds that $\tilde\rho(t,\cdot)=\rho(t,\cdot)$ as bounded functions on $\R^d$. Since $\mathcal{S}_\mathcal{N}$ has full measure in $\R$, we have $\tilde\rho(t,x)=\rho(t,x)$ for $\Leb{d+1}$-a.e. $(t,x)\in\R\times\R^d$, whence $\tilde\rho$ is a bounded weak solution of \eqref{eqPDE} along $\bb$. Item $(i)$ then follows by \eqref{eqn_tilde_lambda_norm_compact_time} for the time interval $I$. Item $(ii)$ follows by making a-priori bounds on $\tilde\rho$ as in \eqref{eqn_a_priori_bound_comput}.

	\bigskip 
\end{proof}

\subsection{Convergence of time continuous representatives} 
Solutions of \eqref{eqn_boundary_value} along a suitable regularisation of a bounded, divergence-free $\bb$ have their time continuous representative converging $C(\R;w^*-L^\infty(\R^d))$. This is recorded in the following lemma. 
\begin{lemma}\label{lem_existence}
	Consider a bounded, divergence-free vector field $\bb:\R\times\R^d\to \R^d$, and boundary datum $\bar\rho\in  L^\infty(\R^d)$, and a regularisation $(\bb^k)_{k\in\N}$ such that $\div_x\bb^k=0$. Assume that the unique bounded weak solution $\rho^k$ of \eqref{eqn_boundary_value} along $\bb^k$ converges weakly-star in $L^\infty(\R\times\R^d)$ to some bounded weak solution $\rho$ of \eqref{eqn_boundary_value} along $\bb$. 
	Then, the time continuous representative of $\rho^k$ converges in $C(\R;w^*-L^\infty(\R^d))$ to the time continuous representative of $\rho$. 
\end{lemma}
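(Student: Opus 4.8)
The plan is to upgrade the weak-star $L^\infty(\R\times\R^d)$ convergence $\rho^k \rightharpoonup^* \rho$ to the pointwise-in-time statement that $\tilde\rho^k(t,\cdot) \rightharpoonup^* \tilde\rho(t,\cdot)$ in $L^\infty(\R^d)$ for every $t$, uniformly on compact time intervals. The key structural input is the equicontinuity in time furnished by Lemma \ref{lem_given_weak_sol_exist_cont_repr}(ii): because each $\bb^k$ is divergence-free, its flow $\XX^k(t,s,\cdot)$ preserves $\Leb{d}$, so $\|\rho^k\|_{L^\infty_{t,x}} = \|\bar\rho\|_{L^\infty_x}$ uniformly in $k$, and hence the a-priori bound \eqref{eqn_a_priori_bound} gives, for each fixed $\phi\in C^1_c(\R^d)$ and each compact $J\subset\R$,
\begin{equation*}
\Big|\frac{d}{dt}\int_{\R^d}\tilde\rho^k(t,x)\phi(x)\,dx\Big|\leq \|\nabla_x\phi\|_{L^1_x}\|\bar\rho\|_{L^\infty_x}\|\bb\|_{L^\infty(J\times\R^d)},
\end{equation*}
a Lipschitz bound \emph{independent of $k$}. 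Together with the uniform bound $\sup_{t\in J}|\int \tilde\rho^k(t,x)\phi(x)\,dx|\leq \|\bar\rho\|_{L^\infty_x}\|\phi\|_{L^1_x}$, the family of functions $t\mapsto \int_{\R^d}\tilde\rho^k(t,x)\phi(x)\,dx$ is uniformly bounded and uniformly Lipschitz on $J$, hence precompact in $C(J;\R)$ by Arzel\`a--Ascoli.

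Next I would identify the limit. Since $\rho^k\rightharpoonup^*\rho$ in $L^\infty(\R\times\R^d)$, for any $\psi\in C_c^\infty(\R)$ and $\phi\in C^1_c(\R^d)$ we have $\int_\R \psi(t)\big(\int_{\R^d}\rho^k(t,x)\phi(x)\,dx\big)dt \to \int_\R \psi(t)\big(\int_{\R^d}\rho(t,x)\phi(x)\,dx\big)dt$; but by the time-continuous representatives these integrands equal $\int_{\R^d}\tilde\rho^k(t,x)\phi(x)\,dx$ and $\int_{\R^d}\tilde\rho(t,x)\phi(x)\,dx$ respectively. So any $C(J)$-subsequential limit of $t\mapsto\int \tilde\rho^k(t,x)\phi(x)\,dx$ must agree with $t\mapsto\int\tilde\rho(t,x)\phi(x)\,dx$ in the distributional sense on the interior of $J$, and both being continuous they agree everywhere on $J$. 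By the usual subsequence argument, the whole sequence converges: $\int_{\R^d}\tilde\rho^k(t,x)\phi(x)\,dx \to \int_{\R^d}\tilde\rho(t,x)\phi(x)\,dx$ uniformly on $J$, for each fixed $\phi\in C^1_c(\R^d)$.

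Finally I would promote this from a fixed $\phi$ to genuine convergence in $C(\R;w^*-L^\infty(\R^d))$. Recall the topology on $C(J;w^*-L^\infty(\R^d))$: it suffices to show $\sup_{t\in J}|\int_{\R^d}(\tilde\rho^k(t,x)-\tilde\rho(t,x))\phi(x)\,dx|\to 0$ for every $\phi\in L^1(\R^d)$ and every compact $J$. Fix such $\phi$ and $\e>0$; choose $\phi_\e\in C^1_c(\R^d)$ with $\|\phi-\phi_\e\|_{L^1_x}<\e$. Then
\begin{equation*}
\sup_{t\in J}\Big|\int_{\R^d}(\tilde\rho^k-\tilde\rho)(t,x)\phi(x)\,dx\Big|\leq \sup_{t\in J}\Big|\int_{\R^d}(\tilde\rho^k-\tilde\rho)(t,x)\phi_\e(x)\,dx\Big| + \big(\|\bar\rho\|_{L^\infty_x}+\|\rho\|_{L^\infty_{t,x}}\big)\e,
\end{equation*}
using the uniform $L^\infty_x$ bounds on $\tilde\rho^k(t,\cdot)$ and $\tilde\rho(t,\cdot)$ from Lemma \ref{lem_given_weak_sol_exist_cont_repr}(i). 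The first term tends to $0$ by the previous paragraph; since $\e$ is arbitrary, the claim follows, and this is exactly convergence of the time continuous representatives in $C(\R;w^*-L^\infty(\R^d))$.

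The main obstacle — really the one genuinely substantive point — is securing the $k$-uniformity of the Lipschitz-in-time bound, which is precisely where the divergence-free hypothesis on each $\bb^k$ is used: without $\div_x\bb^k=0$ one loses the uniform control $\|\rho^k\|_{L^\infty_{t,x}}=\|\bar\rho\|_{L^\infty_x}$ and the Arzel\`a--Ascoli argument collapses. The rest is a routine density-plus-diagonal argument, and one should be mildly careful that the convergence is only claimed on compact time intervals (so $C(\R;\cdot)$ is metrized by the countable family of sup-seminorms over $J=[-n,n]$), but this is standard.
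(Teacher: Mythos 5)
Your proof is correct and follows essentially the same route as the paper's: uniform-in-$k$ Lipschitz bounds on $t\mapsto\int\tilde\rho^k(t,x)\phi(x)\,dx$ coming from the divergence-free structure, Arzel\`a--Ascoli, identification of the limit via the assumed weak-star convergence of $\rho^k$ tested against tensor products $\psi(t)\phi(x)$, and a density argument in $L^1(\R^d)$. The only difference is organizational -- you identify the limit $\phi$-by-$\phi$ and so avoid the paper's diagonal extraction over a countable dense family and its explicit construction of the limit object $\zeta\in C(\R;w^*\text{-}L^\infty(\R^d))$ -- which is a harmless streamlining.
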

\begin{proof}
	Let $\XX^k$ be the unique flow of $\bb^k$. Then, the unique weak solution $\rho^k$ of \eqref{eqn_boundary_value} along $\bb^k$ is given by : 
	\begin{equation*}
	\rho^k(t,x)\Leb{d}=\XX^k(t,s,x)_\#\bar\rho\Leb{d},
	\end{equation*}
	and by Lemma \ref{lem_given_weak_sol_exist_cont_repr}, has a unique time continuous representative $\tilde\rho^k$. Similarly, $\rho$ has a unique time continuous representative $\tilde\rho$. 
	
We will use a subsubsequence arguement to show that $(\tilde\rho^k)_{k\in\N}$ has a unique accumulation point. Accordingly, let $\xi:\N\to\N$ be an increasing map. We will show that there is another increasing map $\delta:\N\to\N$ such that $\tilde\rho^{(\delta\circ\xi)(k)}$ converges in $C(\R;w^*-L^\infty(\R^d))$ to the time continuous representative of $\tilde\rho$ as $k\to+\infty$. 
	
	\bigskip
		STEP 1 (A-priori bounds): Since $\bb$ is divergence-free, then $\bb^k$ is also divergence-free, and we have $\XX^k(t,s,\cdot)_\#\Leb{d}=\Leb{d}$, whence $\|\rho^k\|_{L^\infty_{t,x}}=\|\bar\rho\|_{L^\infty_x}$. Therefore, by item $(i)$ of Lemma \ref{lem_given_weak_sol_exist_cont_repr} and taking a covering of $\R$ by compact intervals, we get
   		\begin{equation}\label{eqn_control_L^infty}
   		\sup_{t\in\R}	\|\tilde\rho^k(t,\cdot)\|_{L^\infty_{x}}=\|\bar\rho\|_{L^\infty_x}.
   			\end{equation} 
	By the a-priori bound \eqref{eqn_a_priori_bound}, for every $\phi\in C^1_c(\R^d)$, and every compact time interval $I\subset \R$, there exists a real constant $L_{\phi,I}>0$ such that 
the functions 
\begin{equation*}
f_\phi^k:\R\ni t\mapsto \int_{\R^d} \rho^k(t,x)\phi(x)dx,
\end{equation*}
 are continuous with Lipschitz constant on $I$ bounded by $L_{\phi,I}$. 
 and satisfy 
 $$\sup_{t\in\R,k\in\N}|f^k_\phi(t)|\leq  \|\bar\rho\|_{L_x^\infty}\|\phi\|_{L_x^1}.$$

\bigskip 
		STEP 2 (Compactness):
		Let $\mathcal{N}\subset C^1_c(\R^d)$ be a countable, dense subset of $L^1(\R^d)$. 
		By Ascoli's Theorem, for every $\phi\in \mathcal{N}$, there is an increasing map $\psi:\N\to \N$ such that $f^{(\psi\circ\xi)(k)}_\phi$ converges uniformly on compact time intervals as $k\to+\infty$. Therefore, there exists a diagonal increasing map $\delta:\N\to\N$ such that 
		\begin{itemize} 
		\item[(C)]\label{item_conv} for every $\phi\in \mathcal{N}$, the function $f^{(\delta\circ\xi)(k)}_\phi$ converge uniformly on compact time intervals to a continuous function $f_\phi$ as $k\to+\infty$. 
	\end{itemize}

\bigskip
		STEP 3 (Choosing a representative): We reason as in Step 2 of the proof of Lemma \ref{lem_given_weak_sol_exist_cont_repr}.
		Equip $C(\R;\R)$ with the topology of uniform convergence on compact time intervals, and equip $\mathcal{N}$ with the topology induced from $L^1(\R^d)$. 
		Consider the continuous linear operator 
		\begin{equation}
		\Lambda:\mathcal{N}\ni \phi \mapsto f_\phi\in C(\R;\R). 
		\end{equation} 
		By density of $\mathcal{N}$ in $L^1(\R^d)$ and by \eqref{eqn_control_L^infty}, $\Lambda$ admits a unique continuous extension $\tilde\Lambda$ to $L^1(\R^d)$. Equipping $(L^1(\R^d))^*$  with the weak-star topology, we can then identify $\tilde\Lambda$ with a family of linear functionals $$\tilde\Lambda: \R\ni t \mapsto \Lambda_t\in (L^1(\R^d))^*,$$ 
		such that for every $t\in\R$ $$\tilde\Lambda_t:L^1(\R^d)\ni\phi\mapsto[\tilde\Lambda(\phi)](t)\in \R.$$ 
	Thus, by the duality $(L^1(\R^d))^*\cong L^\infty(\R^d)$, for every $t\in\R$ we have that $\tilde\Lambda_t$ can be uniquely identified with an element $\zeta(t,\cdot)$ in $L^\infty(\R^d)$.  
	Let us check that $\zeta$ is in $C(\R;w^*-L^\infty(\R^d))$. The weak-star topology on $L^\infty(\R^d)$ is induced by the family of seminorms defined by $p_\phi(f)=|\int_{\R^d}\phi(x)f(x)dx|$ for every $\phi \in L^1(\R^d)$ and every $f\in L^\infty(\R^d)$. For every $\phi\in L^1(\R^d)$ and every $t_1,t_2\in \R$, we then have 
	\begin{equation}
	\begin{split} 
	\Big|p_\phi\Big(\zeta(t_1,\cdot)-\zeta(t_2,\cdot)\Big)\Big|&=\Big|\int_{\R^d}\phi(x)\zeta(t_1,x)dx-\int_{\R^d}\phi(x)\zeta(t_2,x)dx\Big|,\\
	&=\Big|\tilde\Lambda_{t_1}(\phi)-\tilde\Lambda_{t_2}(\phi)\Big|,\\&=\Big|[\tilde\Lambda(\phi)](t_1)-[\tilde\Lambda(\phi)](t_2)\Big|,
	\end{split} 
	\end{equation}
	which can be made arbitrarly small by continuity of $\tilde\Lambda(\phi)$ and by taking $|t_1-t_2|$ sufficiently small.
	Therefore, $\zeta \in C(\R;w^*-L^\infty(\R^d))$, and
	\begin{equation}\label{eqn_lambda_rho}
	\tilde\Lambda_t(\phi)=\int_{\R^d}\zeta(t,x)\phi(x)dx\qquad\forall\phi\in L^1(\R^d).
	\end{equation}
	
	Notice also that by construction, $\tilde\Lambda_t$ is the unique continuous extension of the linear functional $$\Lambda_t:\mathcal{N}\ni\phi\mapsto f_\phi(t)\in\R.$$
	So, for every $t\in \mathcal{S}_\mathcal{N}$ we have 
	\begin{equation}
	\tilde\Lambda_t(\phi)=\int_{\R^d}\rho(t,x)\phi(x)dx \qquad\forall \phi\in \mathcal{N}. 
	\end{equation}
		
		\bigskip 
		STEP 4 (Identification of $\zeta=\tilde\rho$):
		Locally uniformly in $t\in\R$, 
		we have 
		\begin{equation}
		\lim_{k\to+\infty}\int_{\R^d}\tilde\rho^{(\delta\circ \xi)(k)}(t,x)\phi(x)dx=\int_{\R^d}\zeta(t,x)\phi(x)dx \qquad\forall\phi\in\mathcal{N}.
		\end{equation}
Therefore, for any $\psi\in C^1_c(\R)$ and any $\phi\in \mathcal{N}$, we have 
\begin{equation}\label{eqn_conv_against_product} 
	\lim_{k\to+\infty}\int_\R\int_{\R^d}\tilde\rho^{(\delta\circ \xi)(k)}(t,x)\phi(x)\psi(t)dxdt=\int_\R\int_{\R^d}\zeta(t,x)\phi(x)\psi(t)dxdt.
\end{equation}
Let $\e>0$. 
By density in $L^1(\R \times\R^d)$ of the set
\begin{equation*}
\mathcal{D}:=\Big\{\sum_{k=1}^N\phi_k(x)\psi_k(t) \;:\;\phi_k\in\mathcal{N}, \; \psi_k\in C^1_c(\R),\;N\in\N\Big\}, 
\end{equation*}
for any $\eta\in L^1(\R\times\R^d)$, there is $\nu\in\mathcal{D}$ such that $\|\eta-\nu\|_{L_{t,x}^1}<\e$. 
Write 
\begin{equation}
\begin{split} 
\int_\R\int_{\R^d}\tilde\rho^{(\delta\circ \xi)(k)}(t,x)\eta(t,x)dxdt=&\int_\R\int_{\R^d}\tilde\rho^{(\delta\circ \xi)(k)}(t,x)(\eta(t,x)-\nu(t,x))dxdt\\
&+\int_\R\int_{\R^d}\tilde\rho^{(\delta\circ \xi)(k)}(t,x)\nu(t,x)dxdt.\\
\end{split} 
\end{equation}
Using \eqref{eqn_control_L^infty} and H\"older inequality, we have 
\begin{equation}
\Big|\int_\R\int_{\R^d}\tilde\rho^{(\delta\circ \xi)(k)}(t,x)(\eta(t,x)-\nu(t,x))dxdt\Big|\leq \|\bar\rho\|_{L^\infty_x}\|\eta-\nu\|_{L^1_{t,x}}\leq \e\|\bar\rho\|_{L^\infty_x}. 
\end{equation}
By \eqref{eqn_conv_against_product}, we have for $k$ sufficiently large that
\begin{equation}
\Big|\int_{\R}\int_{\R^d}\tilde\rho^{(\delta\circ\xi)(k)}(t,x)\nu(t,x)dxdt-\int_{\R}\int_{\R^d}\zeta(t,x)\nu(t,x)dxdt\Big|<\e, 
\end{equation}
and as $\|\zeta\|_{L^\infty_{t,x}}\leq \|\bar\rho\|_{L^\infty_x}$, we also have that 
\begin{equation}
\Big|\int_{\R}\int_{\R^d}\zeta(t,x)(\nu(t,x)-\eta(t,x))dxdt\Big|\leq\e \|\bar\rho\|_{L^\infty_x},
\end{equation}
therefore we have 
\begin{equation}
\lim_{k\to+\infty}\int_{\R}\int_{\R^d}\tilde\rho^{(\delta\circ \xi)(k)}(t,x)\eta(t,x)dxdt= \int_{\R}\int_{\R^d}\zeta(t,x)\eta(t,x)dxdt.
\end{equation}
 So, 
		it holds that $\rho^{(\delta\circ\xi)(k)}$ converges weakly-star in $L^\infty(\R\times\R^d)$ to $\zeta$ as $k\to+\infty$. 
		By the hypothesis of the lemma, we have $\zeta=\rho$ as functions in $L^\infty(\R\times\R^d)$. By uniqueness of the time continuous representative $\tilde\rho$ of $\rho$, we have $\zeta=\tilde\rho$ as functions in $C(\R;w^*-L^\infty(\R^d))$.

		\bigskip 
		STEP 5 (Convergence to $\tilde\rho$): We now prove that $\tilde\rho^{(\delta\circ\xi)(k)}$ converges to $ \tilde\rho$ in $C(\R;w^*-L^\infty(\R^d))$ as $k\to+\infty$. Let $\phi\in L^1(\R^d)$, $\tilde\phi\in \mathcal{N}$, and $\e>0$ such that $\|\tilde\phi-\phi\|_{L^1_x}<\e$. Note that, 
		\begin{equation}
		\begin{split} 
	\int_{\R^d}\tilde\rho^{(\delta\circ\xi)(k)}(t,x)\phi(x)dx&=\int_{\R^d}\tilde\rho^{(\delta\circ\xi)(k)}(t,x)\Big(\phi(x)-\tilde\phi(x)\Big)dx+\int_{\R^d}\Big(\tilde\rho^{(\delta\circ\xi)(k)}(t,x)-\tilde\rho(t,x)\Big)\tilde\phi(x)dx\\
		&+\int_{\R^d}\tilde\rho(t,x)\Big(\tilde\phi(x)-\phi(x)\Big)dx+\int_{\R^d}\rho(t,x)\phi(x)dx.
		\end{split} 
		\end{equation}
		The first term is bounded by $\e\|\bar\rho\|_{L^\infty_x}$ by H\"older inequality and by \eqref{eqn_control_L^infty}. The second term is bounded by $\e$ for $k$ sufficiently large thanks to (C). The third term is bounded by $\e\|\bar\rho\|_{L^\infty_x}$ by H\"older inequality and by \eqref{eqn_control_L^infty}.
		Therefore, we see that 
		\begin{equation*}
		\lim_{k\to+\infty}\int_{\R^d}\tilde\rho^{(\delta\circ\xi)(k)}(t,x)\phi(x)dx=\int_{\R^d}\tilde\rho(t,x)\phi(x)dx,
		\end{equation*}
		whence, $\tilde\rho^{(\delta\circ\xi)(k)}$ converges to $\tilde\rho$ in $C(\R;w^*-L^\infty(\R^d))$ as $k\to+\infty$. Since $\xi:\N\to\N$ was an arbitrary increasing map, by the subsubsequence lemma, it follows that the whole sequence $\tilde\rho^k$ converges in $C(\R;w^*-L^\infty(\R^d))$ to the time continuous representative of $\tilde\rho$ as $k\to+\infty$, which proves the thesis. 
 \end{proof} 

\subsection{Uniqueness for $\bb\in L^1_{loc}(\R;BV_{loc})$}
We will use the following straightforward consequence of Theorem \ref{thm_uniqueness_bv} for the boundary value problem \eqref{eqn_boundary_value}. 
	\begin{theorem}\label{thm_ambrosio}
Consider a bounded, divergence-free vector field $\bb:\R\times\R^d\to\R^d$and a boundary datum $\bar\rho\in L^\infty(\R^d)$. Assume that $\bb\in  L_{loc}^1(\R;{\rm BV_{loc}}(\R^d;\R^d))$. 
 Then, there exists a unique bounded weak solution of \eqref{eqn_boundary_value} along $\bb$. 
	\end{theorem}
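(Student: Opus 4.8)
The plan is to deduce the statement from Theorem~\ref{thm_uniqueness_bv} by splitting the time line at $t=s$ and, on the half-line $(-\infty,s]$, reversing time. Existence requires nothing new: Theorem~\ref{thm_existence_boundary_value} already produces a bounded weak solution of \eqref{eqn_boundary_value} with $\|\rho\|_{L^\infty_{t,x}}\le\|\bar\rho\|_{L^\infty_x}$, so only uniqueness needs a proof.

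For uniqueness, let $\rho_1,\rho_2$ be bounded weak solutions of \eqref{eqn_boundary_value} along $\bb$ with the same boundary datum $\bar\rho$, and let $\tilde\rho_1,\tilde\rho_2\in C(\R;w^*-L^\infty(\R^d))$ be their time continuous representatives from Lemma~\ref{lem_given_weak_sol_exist_cont_repr}; note $\tilde\rho_i(s,\cdot)=\bar\rho$ and $\sup_{t\in\R}\|\tilde\rho_i(t,\cdot)\|_{L^\infty_x}\le\|\rho_i\|_{L^\infty_{t,x}}$. I would first treat $t\ge s$: for $t\ge0$ set $\sigma_i(t,x):=\tilde\rho_i(t+s,x)$ and $\cc(t,x):=\bb(t+s,x)$. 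Then $\cc$ is bounded, divergence-free and in $L^1_{loc}([0,+\infty);BV_{loc}(\R^d;\R^d))$, and the key step is to check that each $\sigma_i$ is a bounded weak solution of \eqref{eqPDE} along $\cc$ with initial datum $\bar\rho$ in the sense of Definition~\ref{defPDE}. This follows from the a-priori identity $\frac{d}{dt}\int_{\R^d}\tilde\rho_i(t,x)\psi(x)\,dx=\int_{\R^d}\tilde\rho_i(t,x)\,\bb(t,x)\cdot\nabla_x\psi(x)\,dx$, valid for a.e. $t$ and every $\psi\in C^1_c(\R^d)$ (established inside the proof of Lemma~\ref{lem_given_weak_sol_exist_cont_repr}): given $\phi\in C_c^\infty([0,+\infty)\times\R^d)$, the function $g_i(t):=\int_{\R^d}\sigma_i(t,x)\phi(t,x)\,dx$ is continuous, eventually zero, with $g_i(0)=\int_{\R^d}\bar\rho(x)\phi(0,x)\,dx$ and $g_i'(t)=\int_{\R^d}\sigma_i(t,x)\big(\partial_t\phi(t,x)+\cc(t,x)\cdot\nabla_x\phi(t,x)\big)\,dx$ for a.e. $t$; integrating from $0$ to $+\infty$ gives exactly the weak formulation of \eqref{eqPDE}. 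The uniqueness part of Theorem~\ref{thm_uniqueness_bv}, applied to $\cc$ and $\bar\rho$, then forces $\sigma_1=\sigma_2$, i.e. $\tilde\rho_1=\tilde\rho_2$ on $[s,+\infty)\times\R^d$.

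For $t\le s$ I would run the same argument after reversing time: for $t\ge0$ set $\sigma_i(t,x):=\tilde\rho_i(s-t,x)$ and $\cc(t,x):=-\bb(s-t,x)$. Differentiating $g_i(t):=\int_{\R^d}\sigma_i(t,x)\phi(t,x)\,dx$, the sign coming from the chain rule on $s-t$ is absorbed by the sign in $\cc$, so again $g_i'(t)=\int_{\R^d}\sigma_i(t,x)\big(\partial_t\phi(t,x)+\cc(t,x)\cdot\nabla_x\phi(t,x)\big)\,dx$ for a.e. $t$, and $\sigma_i$ is a bounded weak solution of \eqref{eqPDE} along $\cc$ with initial datum $\bar\rho$. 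Moreover $\cc$ is bounded, divergence-free, and $\int_0^T\|\cc(t,\cdot)\|_{BV(B_R)}\,dt=\int_{s-T}^{s}\|\bb(u,\cdot)\|_{BV(B_R)}\,du<\infty$, so $\cc\in L^1_{loc}([0,+\infty);BV_{loc}(\R^d;\R^d))$ and Theorem~\ref{thm_uniqueness_bv} applies, giving $\sigma_1=\sigma_2$, i.e. $\tilde\rho_1=\tilde\rho_2$ on $(-\infty,s]\times\R^d$. Combining the two halves yields $\tilde\rho_1=\tilde\rho_2$ on $\R\times\R^d$, hence $\rho_1=\rho_2$ in $L^\infty(\R\times\R^d)$.

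I expect the only point requiring real care — more than a genuine obstacle — to be the verification that the translated and the time-reversed restrictions of a bounded weak solution of \eqref{eqn_boundary_value} are bounded weak solutions of the initial value problem \eqref{eqPDE}: one has to pass through the time continuous representative to make sense of the trace at $t=s$ and to legitimate the integration in time, and one has to record that time reversal preserves boundedness, the divergence-free condition, and the $L^1_{loc}$-in-time $BV_{loc}$ bound, so that Theorem~\ref{thm_uniqueness_bv} is applicable to $\cc$.
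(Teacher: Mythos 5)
Your proposal is correct and follows essentially the same route as the paper: existence via Theorem \ref{thm_existence_boundary_value}, then uniqueness by splitting at $t=s$, translating on $[s,+\infty)$ and reversing time on $(-\infty,s]$, and invoking Theorem \ref{thm_uniqueness_bv} on each half-line. You are in fact somewhat more careful than the paper's terse argument — you correctly take $\cc(t,x)=-\bb(s-t,x)$ for the backward half (the paper writes $\bb^{\leq}(t,x)=\bb(s-t,x)$ without the sign) and you spell out the verification, via the time continuous representative, that the translated and reversed restrictions are bounded weak solutions of \eqref{eqPDE} in the sense of Definition \ref{defPDE}.
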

\begin{proof}
{	Recall that $s\in\R$ is the time at which the boundary datum is imposed in \eqref{eqn_boundary_value}.}
Let $\rho$ be a bounded weak solution of \eqref{eqn_boundary_value} whose existence follows from Theorem \ref{thm_existence_boundary_value}. By Lemma \ref{lem_given_weak_sol_exist_cont_repr}, $\rho$ has a unique time continuous representative $\tilde\rho$. 
	Define the vector fields $\bb^\leq(t,x)=\bb(s-t,x)$ and $\bb^\geq (t,x)=\bb(t+s,x)$. Then, for $t\in(0,+\infty)$ $\rho^\leq(t,x)=\rho(s-t,x)$ and $\rho^\geq (t,x)=\rho(t+s,x)$ are bounded weak solutions of \eqref{eqPDE} along $\bb^\leq$ and $\bb^\geq$ respectively, and they are thus uniquely determined for $\Leb{d+1}$-a.e. $(t,x)\in (0,+\infty)\times\R^d$ by Theorem \ref{thm_uniqueness_bv}. Therefore $\rho(t,x)$ is uniquely determined for $\Leb{d+1}$-a.e. $(t,x)\in \R\times\R^d$. This proves the thesis.
\end{proof}

\bigskip 
\section{Proof of Theorem \ref{thm_uniqueness}}\label{section_theorem_uniq}
\subsection{Backward uniqueness for $\bb\in L^1_{loc}((0,+\infty);BV_{loc})$}
The following crucial lemma uniquely identifies bounded weak solutions for a boundary datum at a time $s>0$. 
\begin{lemma} \label{lem_uniquenss_weak} 
Consider a bounded, divergence-free vector field $\bb:\R\times\R^d\to \R^d$, a boundary datum $\bar\rho\in L^\infty(\R^d)$, and $s>0$. Assume that $\bb\in L_{loc}^1((0,+\infty);{\rm BV_{loc}}(\R^d;\R^d))$, and that $\bb(t,\cdot)\equiv 0$ for $t<0$. 
Then, there exists a unique bounded weak solution $\rho_{\bar\rho,s}$ to the boundary value problem \eqref{eqn_boundary_value}.

Moreover, for every standard mollifier $\theta\in C^\infty_c(\R\times\R^d)$, the time continuous representative $\tilde\rho_{\bar\rho,s}^k$ of the unique bounded weak solution  of \eqref{eqn_boundary_value} along $\bb\star \theta^k$ converges in $C(\R;w^*-L^\infty(\R^d))$ to the time continuous representative of $\rho_{\bar\rho,s}$. 
\end{lemma}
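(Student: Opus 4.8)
Existence of a bounded weak solution of \eqref{eqn_boundary_value} is already provided by Theorem~\ref{thm_existence_boundary_value}, so the first assertion is an \emph{a posteriori} uniqueness statement, and the second will follow by combining this uniqueness with Lemma~\ref{lem_existence} and weak-$*$ compactness. Throughout I pass to time continuous representatives (Lemma~\ref{lem_given_weak_sol_exist_cont_repr}), so that traces in time make sense and it suffices to identify $\tilde\rho(t,\cdot)$ for every $t\in\R$.

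\textbf{Uniqueness.} Let $\rho,\rho'$ be two bounded weak solutions and $\tilde\rho,\tilde\rho'$ their time continuous representatives; both satisfy $\tilde\rho(s,\cdot)=\tilde\rho'(s,\cdot)=\bar\rho$. I would split $\R$ into $[s,+\infty)$, $(0,s]$ and $(-\infty,0]$. On $[s,+\infty)$: since $s>0$, the translated field $(t,x)\mapsto\bb(t+s,x)$ is bounded, divergence-free and lies in $L^1_{loc}([0,+\infty);BV_{loc}(\R^d;\R^d))$, and $\rho(\cdot+s,\cdot),\rho'(\cdot+s,\cdot)$ are bounded weak solutions of \eqref{eqPDE} along it with the same initial datum $\bar\rho$; Theorem~\ref{thm_uniqueness_bv} forces $\tilde\rho=\tilde\rho'$ on $[s,+\infty)$. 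On $(0,s]$: fix $\e\in(0,s)$ and let $\bb_\e$ coincide with $\bb$ on $[\e,s]$ and vanish outside, so that $\bb_\e$ is bounded, divergence-free and in $L^1_{loc}(\R;BV_{loc}(\R^d;\R^d))$. Glue $\tilde\rho$ restricted to $[\e,s]$ to the time-constant functions $\bar\rho$ for $t>s$ and $\tilde\rho(\e,\cdot)$ for $t<\e$; since $\tilde\rho$ is weak-$*$ continuous at $\e$ and $s$ with $\tilde\rho(s,\cdot)=\bar\rho$, an elementary integration by parts shows the resulting function is a bounded weak solution of \eqref{eqn_boundary_value} along $\bb_\e$ with datum $\bar\rho$ at $s$. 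By Theorem~\ref{thm_ambrosio} this function is unique, so it coincides with the analogous function built from $\rho'$; hence $\tilde\rho=\tilde\rho'$ on $[\e,s]$. Letting $\e\downarrow0$ gives $\tilde\rho=\tilde\rho'$ on $(0,s]$, and weak-$*$ continuity at $0$ yields $\tilde\rho(0,\cdot)=\tilde\rho'(0,\cdot)$. On $(-\infty,0]$: here $\bb\equiv0$, so item (ii) of Lemma~\ref{lem_given_weak_sol_exist_cont_repr}, applied on each $[-n,-1/n]$ and then letting $n\to\infty$ with continuity at $0$, shows that $\tilde\rho$ and $\tilde\rho'$ are constant in time on $(-\infty,0]$, equal to their common value at $0$. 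Thus $\tilde\rho=\tilde\rho'$ on all of $\R$; call the unique solution $\rho_{\bar\rho,s}$ and its time continuous representative $\tilde\rho_{\bar\rho,s}$.

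\textbf{Convergence along a mollification.} Fix a standard mollifier $\theta$ and put $\bb^k=\bb\star\theta^k$. Each $\bb^k$ is smooth, bounded by $\|\bb\|_{L^\infty}$, Lipschitz in $x$ uniformly in $t$, and divergence-free because convolution commutes with $\div_x$; moreover $\bb^k\to\bb$ in $L^1_{loc}(\R\times\R^d)$. Let $\rho^k$ be the unique bounded weak solution of \eqref{eqn_boundary_value} along $\bb^k$; since the flow $\XX^k(t,s,\cdot)$ preserves Lebesgue measure, $\|\rho^k\|_{L^\infty_{t,x}}=\|\bar\rho\|_{L^\infty_x}$. By Banach--Alaoglu every subsequence of $(\rho^k)_{k\in\N}$ has a further subsequence converging weakly-$*$ in $L^\infty(\R\times\R^d)$, and the limit is a bounded weak solution of \eqref{eqn_boundary_value} along $\bb$, because in every term of the weak formulation one pairs a weakly-$*$ convergent sequence with the strongly $L^1$-convergent sequence $\bb^k\cdot\nabla_x\phi$. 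By the uniqueness proved above this limit is always $\rho_{\bar\rho,s}$, so the whole sequence $(\rho^k)_{k\in\N}$ converges weakly-$*$ to $\rho_{\bar\rho,s}$ in $L^\infty(\R\times\R^d)$. Lemma~\ref{lem_existence}, whose hypotheses are then met, gives that the time continuous representative $\tilde\rho^k$ converges to $\tilde\rho_{\bar\rho,s}$ in $C(\R;w^*-L^\infty(\R^d))$, which is the assertion.

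\textbf{Main difficulty.} The heart of the matter is the region $(0,s]$, where the vector field degenerates (its $BV$-seminorm need not be integrable up to $t=0$), so that no uniqueness argument can be run on a neighbourhood of $0$. The point is that one does not need to solve anything at $t=0$: backward uniqueness holds on each $[\e,s]$ with $\e>0$ by Theorem~\ref{thm_ambrosio}, and the weak-$*$ time continuous representative together with the vanishing of $\bb$ for $t<0$ rigidly propagates the identification through $t=0$ into $t\le0$. The only genuinely technical, but routine, point is the gluing/extension that casts backward uniqueness on $[\e,s]$ in the exact form of Theorem~\ref{thm_ambrosio}.
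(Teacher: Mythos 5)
Your proposal is correct and follows essentially the same route as the paper: truncate the vector field away from $t=0$ so that Theorem~\ref{thm_ambrosio} (equivalently Theorem~\ref{thm_uniqueness_bv} after a time shift) applies, let the truncation parameter tend to $0$, recover $\tilde\rho(0,\cdot)$ by weak-$*$ time continuity and the negative times from $\bb\equiv 0$ there, and then obtain the convergence statement via Banach--Alaoglu, the uniqueness just proved, a subsubsequence argument, and Lemma~\ref{lem_existence}. The only cosmetic difference is that the paper treats all of $[\tau,+\infty)$ at once with the one-sidedly truncated field $\bb_\tau$, whereas you handle $[s,+\infty)$ and $(0,s]$ separately and make the gluing explicit.
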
 
\begin{proof}
	
Let $\rho$ be a bounded weak solution of \eqref{eqn_boundary_value} along $\bb$ whose existence follows from Theorem \ref{thm_existence_boundary_value}.  By Lemma \ref{lem_given_weak_sol_exist_cont_repr}, $\rho$ has a unique time continuous representative $\tilde\rho$. 
	To prove uniqueness, we will show that $\tilde\rho$ is uniquely determined by bounded weak solutions of \eqref{eqn_boundary_value} along a regularised version of $\bb$. Let us consider for any $\tau>0$ the vector fields 
	\begin{equation*}
	\bb_\tau(t,x):=\left\{ 
	\begin{split} 
	\bb(t,x)\qquad &\text{if} \quad t\geq \tau,\\
	0\qquad &\text{if}\quad t<\tau.
	\end{split}
	\right. 
	\end{equation*}

STEP 1 (Determining $\tilde\rho(t,\cdot)$ for $t\in (0,+\infty)$):
It follows directly from Definition \ref{def_boundary_value} that $\tilde\rho$ is a bounded weak solution on $[\tau,+\infty)$ of \eqref{eqn_boundary_value} along $\bb_\tau$. Also, notice that	 $\bb_\tau(t,x)=\bb(t,x)$ for $\Leb{d+1}$-a.e. $(t,x)\in(\tau,+\infty)\times\R^d$. As $\bb_\tau$ satisfies the hypothesis of Theorem \ref{thm_ambrosio}, there exists a unique bounded weak solution
$\rho_{\phi,\tau,s}$ of 
\eqref{eqn_extension_zero} along $\bb_\tau$ with time continuous representative $\tilde\rho_{\phi,\tau,s}$. Therefore, for every $t\in [\tau,+\infty)$, we have for a.e. $x\in\R^d$
\begin{equation}\label{eqn_determine_weak}
\tilde\rho_{\phi,\tau,s}(t,x)=\tilde\rho(t,x). 
\end{equation}
  As $\tau>0$ is an arbitrary positive real number, \eqref{eqn_determine_weak} uniquely determines $\tilde\rho$ in $C((0,+\infty); w^*-L^\infty(\R^d))$. 
  
  \bigskip 
  STEP 2 (Determining $\tilde\rho(0,\cdot)$):
{Since $\tilde\rho$ belongs to $C(\R;w^*-L^\infty(\R^d))$, we know that  $\tilde\rho(0,\cdot)=\lim_{t\downarrow 0}\tilde\rho(t,\cdot)$, where the limit is taken with respect to the weak-star topology on $L^\infty(\R^d)$. As $\tilde\rho$ has been uniquely determined for positive times in Step 1, it follows that $\tilde\rho(0,\cdot)$ is also uniquely determined. }

 \bigskip
 STEP 3 (Determining $\tilde\rho(t,\cdot)$ for $t<0$):
  Since $\bb(t,\cdot)\equiv0$ for $t<0$, by item $(ii)$ of Lemma \ref{lem_given_weak_sol_exist_cont_repr}, we have for $t<0$
  \begin{equation}
  \int_{\R^d}\tilde\rho(t,x)\phi(x)dx=  \int_{\R^d}\tilde\rho(0,x)\phi(x)dx\qquad\forall\phi\in C^1_c(\R^d). 
  \end{equation}
By density of $C_c^1(\R^d)$ in $L^1(\R^d)$, it follows that $\tilde\rho(t,\cdot)=\tilde\rho(0,\cdot)$ as bounded functions. Therefore $\tilde\rho(t,\cdot)$ is uniquely determined for every $t\in\R$. By uniqueness of the time continuous representative of a bounded weak solution, it follows that there exists a unique bounded weak solution $\rho$ of \eqref{eqn_a_priori_bound} along $\bb$, which we shall denote by $\rho_{\bar\rho,s}$.

	\bigskip
	STEP 4 (Convergence of regularisations):
	Let us come to the final part of the statement. Let $\theta\in C_c^\infty(\R\times\R^d)$ be a standard mollifier. 
By the classical theory, there exists a unique bounded weak solution \eqref{eqn_boundary_value} along $\bb\star\theta^k$ we shall denote by $\rho^k_{\bar\rho,s}$ and satisfying $\|\rho^k_{\bar\rho,s}\|_{L^\infty_{t,x}}=\|\bar\rho\|_{L^\infty_x}$. Therefore, by the Banach-Alaouglu theorem, up to extracing a subsequence such that $\rho_{\bar\rho,s}^k$ converges to some $\rho$ weakly-star in $L^\infty(\R\times\R^d)$. Since $\bb^k\to \bb$ strongly in $L^1_{loc}$, it follows that $\rho$ is a bounded weak solution of \eqref{eqn_a_priori_bound} along $\bb$. The uniqueness in first part of this lemma implies that $\rho=\rho_{\bar\rho,s}$. A subsubsequence argument therefore implies that the whole sequence $\rho_{\bar\rho,s}^k$ converges to $\rho_{\bar\rho,s}$ weakly-star in $L^\infty(\R\times\R^d)$ as $k\to+\infty$.
By Lemma \ref{lem_existence}, the time continuous representative $\tilde\rho^k_{\bar\rho,s}$ converges in $C(\R;w^*-L^\infty(\R^d))$ to the time continuous representative $\tilde\rho_{\bar\rho,s}$. This proves the thesis. 
	
	
	
\end{proof}

\subsection{Proof of Theorem \ref{thm_uniqueness}}\label{sec_proof_thm_uniq}
We can now conclude this section. 
{Let us give some preliminary tools, which will be useful for establishing weak compactness. Fix a measure space $(X,\mathscr{E})$ and a positive measure $\mu$ on it. We can then form the Lebesgue spaces $L^p(X,\mu)$, which are Banach spaces for $1\leq p\leq +\infty$. Recall that a family $\mathscr{F}\subset L^1(X,\mu)$ is equi-integrable, if 
\begin{equation}
\lim_{C\to+\infty}\sup_{f\in\mathscr{F}}\int_{\{|f|>C\}}|f|d\mu=0.
\end{equation}
The Dunford-Pettis theorem (see \cite[Theorem 1.38]{AmbFuscPalBV}) asserts that $\mathscr{F}$ is relatively weakly sequentially compact subset of $L^1(X,\mu),$ if and only if $\mathscr{F}$ is equi-integrable. }
\begin{proof}[Proof of Theorem \ref{thm_uniqueness} ]
Extend $\bb$ by zero for negative times and choose a standard mollifier $\theta\in C_c^\infty(\R\times\R^d)$. Fix $\phi \in C^\infty_c((0,+\infty)\times\R^d)$, and write $\bb^k=\bb\star \theta^k$. {Recall that $\XX^k$ is the flow along $\bb^k$. As for every $k\in\N,$ we have that $$\sup_{(t,x)\in[0,+\infty)\times\R^d}|\bb^k(t,x)|\leq \|\bb\|_{L^\infty_{t,x}},$$
therefore the following finite speed of propagation property holds:
\begin{itemize}
	\item [(FS)] for every $x\in\R^d$, every $k\in\N$, and every $s,t\in \R$ we have $|\XX^k(t,s,x)-\XX^k(s,s,x)|\leq |t-s|\|\bb\|_{L^\infty_{t,x}}$. 
\end{itemize}
}
\bigskip 
STEP 1 (Solutions from the classical theory):
By the classical theory, we have 
\begin{enumerate} 
	{
	\item the density $\rho^k:[0,+\infty)\times\R^d\to \R$ given by the formula $$\rho^k(t,\cdot)\Leb{d}=\XX^k(t,0,\cdot)_\#\bar\rho\Leb{d},$$ is the unique weak solution of \eqref{eqPDE} along $\bb^k$ with initial datum $\bar\rho$.} 
\item the bounded density $\tilde\rho^k_{\phi(s,\cdot),s}:\R\times\R^d\to\R$ given by the formula $$\tilde\rho^k_{\phi(s,\cdot),s}(t,\cdot)\Leb{d}=\XX^k(t,s,\cdot)_\#\phi(s,\cdot)\Leb{d},$$ is the time continuous representative of the unique bounded weak solution of \eqref{eqn_boundary_value} along $\bb^k$ with boundary datum $\phi$, { and for every $s\in\R$ satisfies that $\sup_{t\in\R} \|\tilde\rho^k_{\phi(s,\cdot),s}(t,\cdot)\|_{L^\infty_x}\leq\|\phi\|_{L^\infty_{t,x}}$.}
{
	By (FS) above, and since $\phi$ is compactly supported, the following property holds:
	
\begin{itemize} 
\item[(CS)] there exists $R>0$ sufficiently large that $\supp \tilde\rho^k_{\phi(s,\cdot),s}(0,\cdot)\subset B_R(0)$ and$\supp\phi(s,\cdot)\subset B_R(0)$ for every $s\in (0,+\infty)$. 
\end{itemize} }
\end{enumerate}
We will use a subsubsequence argument to characterise the weak limit points of $\rho^k$. Let $\xi:\N\to\N$ be an increasing map.

\bigskip 
STEP 2 (Compactness):
{Let us check relative weak sequential compactness of the sequence $(\rho^{\xi(k)})_{k\in\N}$ in $L^1([0,n]\times [-n,n]^d)$ for every $n\in \N$. First, note that as $\bb^{\xi(k)}$ is divergence-free for every $k\in\N$, we have $\rho^{\xi(k)}(t,\cdot)\Leb{d}=\XX^{\xi(k)}(t,0,\cdot)_\#\bar\rho\Leb{d}=\bar\rho(\XX^{\xi(k)}(0,t,\cdot))\Leb{d}$ for every $t\geq 0$. Now, let $n\in\N$.
	Let us now consider the following quantity
\begin{equation*}
\begin{split} 
I:&=\lim_{C\to+\infty}\sup_{k\in\N}\int_0^n\int_{\{|\rho^{\xi(k)}(t,\cdot)|>C\}\cap  [-n,n]^d}|\rho^{\xi(k)}(t,x)|dxdt\\
&=\lim_{C\to+\infty}\sup_{k\in\N}\int_0^n\int_{\{|\rho^{\xi(k)}(t,\cdot)|>C\}\cap  [-n,n]^d}|\bar\rho(\XX^{\xi(k)}(0,t,\cdot))|dxdt\\
&=\lim_{C\to+\infty}\sup_{k\in\N}\int_0^n\int_{\XX^{\xi(k)}\big(0,t,\{|\rho^{\xi(k)}(t,\cdot)|>C\}\cap [-n,n]^d\big)}|\bar\rho(x)|dxdt
\end{split}
\end{equation*}
Observe that by (FS), we have $$\XX^{\xi(k)}(t,0,[-n,n]^d)\subset [-n-n\|\bb\|_{L^\infty_{t,x}},n+n\|\bb\|_{L^\infty_{t,x}}]^d$$ for every $t\in [0,n]$
whence, up to sets of vanishing Lebesgue measure, the following inclusions hold
\begin{equation} 
\begin{split} 
& \XX^{\xi(k)}\big(0,t,\{|\rho^{\xi(k)}(t,\cdot)|>C\}\cap  [-n,n]^d\big)\\
&\subset \XX^{\xi(k)}(0,t,\{|\rho^{\xi(k)}(t,\cdot)|>C\})\cap \XX^{\xi(k)}(0,t, [-n,n]^d)\\
 &\subset \XX^{\xi(k)}(0,t,\{|\rho^{\xi(k)}(t,\cdot)|>C\})\cap  [-n-n\|\bb\|_{L^\infty_{t,x}},n+n\|\bb\|_{L^\infty_{t,x}}]^d\\
 &=\{|\bar\rho|>C\}\cap [-n-n\|\bb\|_{L^\infty_{t,x}},n+n\|\bb\|_{L^\infty_{t,x}}]^d. 
 \end{split} 
 \end{equation} 
 Therefore,
 \begin{equation}
 \begin{split}
 I&\leq 
 n\lim_{C\to+\infty}\sup_{k\in\N}\int_{\{|\bar\rho|>C\}\cap[-n-n\|\bb\|_{L^\infty_{t,x}},n+n\|\bb\|_{L^\infty_{t,x}}]^d}|\bar\rho(x)|dx=0.
 \end{split} 
 \end{equation}
 As $n$ was arbitrary, by the Dunford-Pettis theorem (see begin of Section \ref{sec_proof_thm_uniq}), for every $n\in\N$, the sequence $(\rho^{\xi(k)})_{k\in\N}$ is relatively weakly sequentially compact in $L^1([0,n]\times [-n,n]^d)$. 
 
 By a diagonal argument, there is an increasing map $\psi:\N\to\N$ and $\rho\in L^1_{loc}([0,+\infty)\times\R^d)$ such that, for every $n\in\N$, we have that $\rho^{(\psi\circ\xi)(k)}$ converges weakly in $L^1([0,n]\times[-n,n]^d)$ as $k\to+\infty$. 

\bigskip
STEP 3 (Dual representation of $\rho^k$):
{
We thus have for every $k\in \N$
\begin{equation}\label{eqn_change_variable} 
\begin{split} 
\int_0^{+\infty}\int_{B_R(0)}\rho^k(s,x)\phi(s,x)dxds
&=\int_0^{+\infty} \int_{\R^d}\rho^k(s,x)\phi(s,x)dxds\\
&=\int_0^{+\infty}\int_{\R^d} \phi(s,x)\XX^k(s,0,\cdot)_\#\bar\rho(x)dxds\\
&=\int_0^{+\infty}\int_{\R^d} \bar\rho(x)\XX^k(0,s,\cdot)_\#\phi(s,x)dxds\\
&=\int_0^{+\infty}\int_{\R^d} \bar\rho(x) \tilde\rho^k_{\phi(s,\cdot),s}(0,x)dxds\\
&=\int_0^{+\infty}\int_{B_R(0)} \bar\rho(x) \tilde\rho^k_{\phi(s,\cdot),s}(0,x)dxds,
\end{split}
\end{equation}
where in the first and in the last equality, we have used (CS),}
 and where we have performed a change of variable in the third equality, and used that the Jacobian of $\XX^k(s,0,x)$ is one because $\div_x \bb^k=0$ for every $t\in\R$.

\bigskip 
STEP 4 (Passing to the limit):
By Lemma \ref{lem_uniquenss_weak}, for every $s>0$ the time continuous representative $\tilde\rho^k_{\phi(s,\cdot),s}$ converges  in $C(\R;w^*-L^\infty(\R^d))$ to the time continuous representative $\tilde\rho_{\phi(s,\cdot),s}$ of the unique bounded weak solution  of \eqref{eqn_boundary_value} along $\bb$. 
{
	As $\mathbb{1}_{B_R(0)}\bar\rho$ belongs to $L^1(\R^d)$, 
	we therefore have 
	\begin{equation*}
	\int_{B_R(0)} \bar\rho(x) \tilde\rho^k_{\phi(s,\cdot),s}(0,x)dx \underset{k\to+\infty}{\longrightarrow}\int_{B_R(0)} \bar\rho(x) \tilde\rho_{\phi(s,\cdot),s}(0,x)dx \qquad\forall s\geq 0.
	\end{equation*}
	In view of (CS), we have $\supp \tilde\rho_{\phi(s,\cdot),s}(0,\cdot)\subset B_R(0)$, and for $T>0$ sufficiently large, we have $\tilde\rho_{\phi(s,\cdot),s}(0,\cdot)\equiv 0$ for every $s\geq T$. 
Therefore, we have the following uniform in $k$ bound
\begin{equation*}
\begin{split} 
\int_0^{+\infty}\Big|\int_{B_R(0)}\bar\rho(x)\tilde\rho^k_{\phi(s,\cdot),s}(0,x)dx\Big|ds&\leq T \int_{B_R(0)}|\bar\rho(x)|dx\sup_{s\in[0,T]}\|\tilde\rho^k_{\phi(s,\cdot),s}(0,\cdot)\|_{L^\infty_x}\\
&\leq T \int_{B_R(0)}|\bar\rho(x)|dx \|\phi\|_{L^\infty_{t,x}}.
\end{split} 
\end{equation*}
So by the dominated convergence theorem, it follows that
\begin{equation*}
\int_0^{+\infty}\int_{B_R(0)} \bar\rho(x) \tilde\rho^k_{\phi(s,\cdot),s}(0,x)dxds\underset{k\to+\infty}{\longrightarrow} \int_0^{+\infty}\int_{B_R(0)} \bar\rho(x) \tilde\rho_{\phi(s,\cdot),s}(0,x)dxds.
\end{equation*}
As $\rho^{\psi\circ\xi(k)}$ converges weakly in $L^1_{loc}([0,+\infty)\times\R^d)$ to $\rho$, we have
\begin{equation*}
\int_0^{+\infty}\int_{B_R(0)}\rho^{\psi\circ\xi(k)}(s,x)\phi(s,x)dxds\underset{k\to+\infty}{\longrightarrow} \int_0^{+\infty}\int_{B_R(0)}\rho(s,x)\phi(s,x)dxds.
\end{equation*}
We can thus pass into the limit $k\to \infty$ along the map $\psi\circ\xi:\N\to\N$ in \eqref{eqn_change_variable} and get
\begin{equation}
\int_0^{+\infty} \int_{B_R(0)} \rho(s,x)\phi(s,x)dxds=\int_0^{+\infty} \int_{B_R(0)} \bar\rho(x)\tilde\rho_{\phi(s,\cdot),s}(0,x)dxds. 
\end{equation}

\bigskip 
STEP 5 (Characterisation of the limit):
Since $\tilde\rho_{\phi(s,\cdot),s}$ does not depend on the map $\psi\circ\xi:\N\to\N$, 
by the subsubsequence lemma, it follows that the whole sequence converges, i.e. 
\begin{equation}
\begin{split} 
\lim_{k\to+\infty}\int_0^{+\infty} \int_{B_R(0)}\rho^k(s,x)\phi(s,x)dxdt&=\int_0^{+\infty} \int_{B_R(0)} \rho(s,x)\phi(s,x)dxds\\
&=\int_0^{+\infty} \int_{B_R(0)} \bar\rho(x)\tilde\rho_{\phi(s,\cdot),s}(0,x)dxds.
\end{split} 
\end{equation}
Thanks to (CS), we have 
 \begin{equation*}
\int_0^{+\infty} \int_{B_R(0)} \rho(s,x)\phi(s,x)dxds=\int_0^{+\infty}\int_{\R^d}\rho(s,x)\phi(s,x)dxds.
\end{equation*}
Therefore,
\begin{equation*}
\int_0^{+\infty}\int_{\R^d}\rho(s,x)\phi(s,x)dxds=\int_0^{+\infty} \int_{B_R(0)} \bar\rho(x)\tilde\rho_{\phi(s,\cdot),s}(0,x)dxds.
\end{equation*}}
 We thus have characterised $\rho$ in terms of the time continous representative $\tilde\rho_{\phi(s,\cdot),s}$ of the unique bounded weak solutions for \eqref{eqn_boundary_value} along $\bb$ with $s>0$, which does not depend on $\theta$. Since  $\phi\in C_c^\infty((0,+\infty)\times\R^d)$ is arbitrary, it follows that $\rho^k$ converges to $\rho$ in $\mathcal{D}'((0,+\infty)\times\R^d)$.  Since $\theta$ was an arbitrary standard mollifier, the Cauchy problem \eqref{eqPDE} is well-posed along $\mathcal{R}_{conv}$, the convolution regularisation class. }
\end{proof}

\bigskip
\section{proof of Theorem \ref{thm_depauw}}\label{section_proof_depauw} 

\subsection{Construction of the Depauw vector field $\bb_{DP}$} 
We construct the bounded, divergence-free vector field $\bb_{DP}:[0,+\infty)\times\R^2\to \R^2$ of Depauw from \cite{Depauw}, as well as two non-unique bounded weak solutions $\zeta_1$ and $\zeta_2$ of \eqref{eqPDE} along $\bb_{DP}$ with the initial datum $\bar\rho=1/2$. We follow closely the construction of a similar vector field given in \cite{DeLellis_Giri22}.

\bigskip 

Introduce the following two lattices on $\mathbb R^2$, namely $\mathcal{L}^1 := \mathbb Z^2\subset \mathbb R^2$ and $\mathcal{L}^2:=\mathbb Z^2 + (\frac{1}{2}, \frac{1}{2})\subset \mathbb R^2$. To each lattice, associate a subdivision of the plane into squares, which have vertices lying in the corresponding lattices, which we denote by $\mathcal{S}^1$ and $\mathcal{S}^2$. Then consider the rescaled lattices $\mathcal{L}^1_k:= 2^{-k} \mathbb{Z}^2$ and $\mathcal{L}^2_k := (2^{-k-1},2^{-k-1})+2^{-k} \mathbb Z^2$ and the corresponding square subdivision of $\mathbb Z^2$, respectively $\mathcal{S}^1_k$ and $\mathcal{S}^2_k$. Observe that the centres of the squares $\mathcal{S}^1_k$ are elements of $\mathcal{L}^2_k$ and viceversa.


Next, define the following $2$-dimensional autonomous vector field:
\[
\ww(x) =
\begin{cases}
(0, 4x_1)^t\text{ , if }1/2 > |x_1| > |x_2| \\
(-4x_2, 0)^t\text{ , if }1/2 > |x_2| > |x_1| \\
(0, 0)^t\text{ , otherwise.} \\
\end{cases}
\]
$\ww$ is a bounded, divergence-free vector field, whose derivative is a finite matrix-valued Radon measure given by
\begin{equation*}
\begin{split} 
D\ww(x_1,x_2) = 
&\begin{pmatrix}
0 & 0 \\
4{\rm sgn}(x_1) & 0 
\end{pmatrix} 
\Leb{d}\lfloor_{\{|x_2|<|x_1|<1/2\}} +
\begin{pmatrix}
0 & -4{\rm sgn}(x_2) \\
0 & 0 
\end{pmatrix} 
\Leb{d}\lfloor_{\{|x_1|<|x_2|<1/2\}}\\
&+\begin{pmatrix}
4x_2{\rm sgn}(x_1) & -4x_2{\rm sgn}(x_2) \\
4x_1{\rm sgn}(x_1) & -4x_1{\rm sgn}(x_2)
\end{pmatrix} 
\mathscr{H}^{d-1}\lfloor_{\{x_1=x_2,0<|x_1|,|x_2|\leq 1/2\}}\\
\end{split} 
\end{equation*}

Periodise $\ww$ by defining $\Lambda = \{(y_1, y_2) \in \mathbb{Z}^2 : y_1 + y_2 \text{ is even}\}$ and setting 
\[
\uu(x) = \sum_{y \in \Lambda} \ww(x-y)\, .
\]

Even though $\uu$ is non-smooth, it is in $BV_{loc}(\R^2;\R^2)$. 
By the theory of regular Lagrangian flows (see for instance \cite{ambrosiocrippaedi}), there exists a unique incompressible almost everywhere defined flow $\XX$ along $\uu$ can be described explicitely. 
\begin{itemize}
	\item[(R)] The map $\XX (t,0 ,\cdot)$ is Lipschitz on each square $S$ of $\mathcal{S}^2$ and $\XX (1/2,0, \cdot)$ is a clockwise rotation of $\pi/2$ radians of the ``filled'' $S$, while it is the identity on the ``empty ones''. In particular for every $j\geq 1,$ $\XX (1/2,0, \cdot)$ maps an element of $\mathcal{S}^1_j$ rigidly onto another element of $\mathcal{S}^1_j$. For $j=1$ we can be more specific. Each $S\in \mathcal{S}^2$ is formed precisely by $4$ squares of $\mathcal{S}^1_1$: in the case of ``filled'' $S$ the $4$ squares are permuted in a $4$-cycle clockwise, while in the case of ``empty'' $S$ the $4$ squares are kept fixed.  
\end{itemize}

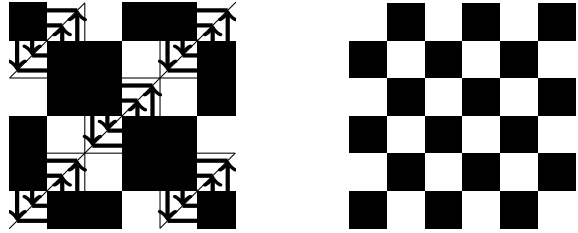
\begin{figure}[h]
	\centering
	\subfloat{
		\begin{tikzpicture}
		\clip(-1.5,-1.5) rectangle (1.5,1.5);
		\foreach \x in {-1,...,2} \foreach \y in {-1,...,2}
		{
			\pgfmathparse{mod(\x+\y,2) ? "black" : "white"}
			\edef\nu{\pgfmathresult}
			\path[fill=\nu, opacity=0.5] (\x-1,\y-1) rectangle ++ (1,1);
		}
		\draw (-1.5,0.5) -- (1.5,0.5);
		\draw (-1.5,-0.5) -- (1.5,-0.5);
		\draw (0.5, -1.5) -- (0.5, 1.5);
		\draw (-0.5, -1.5) -- (-0.5, 1.5);
		\foreach \a in {1,...,2}
		{
			\draw[ultra thick, ->] (\a/5,-\a/5) -- (\a/5,\a/5); 
			\draw[ultra thick, ->] (\a/5,\a/5) -- (-\a/5,\a/5);
			\draw[ultra thick, ->] (-\a/5,\a/5) -- (-\a/5,-\a/5);
			\draw[ultra thick, ->] (-\a/5,-\a/5) -- (\a/5,-\a/5);
		}
		\foreach \a in {1,...,2}
		{
			\draw[ultra thick, ->] (\a/5+1,-\a/5+1) -- (\a/5+1,\a/5+1); 
			\draw[ultra thick, ->] (\a/5+1,\a/5+1) -- (-\a/5+1,\a/5+1);
			\draw[ultra thick, ->] (-\a/5+1,\a/5+1) -- (-\a/5+1,-\a/5+1);
			\draw[ultra thick, ->] (-\a/5+1,-\a/5+1) -- (\a/5+1,-\a/5+1);
		}
		\foreach \a in {1,...,2}
		{
			\draw[ultra thick, ->] (\a/5+1,-\a/5-1) -- (\a/5+1,\a/5-1); 
			\draw[ultra thick, ->] (\a/5+1,\a/5-1) -- (-\a/5+1,\a/5-1);
			\draw[ultra thick, ->] (-\a/5+1,\a/5-1) -- (-\a/5+1,-\a/5-1);
			\draw[ultra thick, ->] (-\a/5+1,-\a/5-1) -- (\a/5+1,-\a/5-1);
		}
		\foreach \a in {1,...,2}
		{
			\draw[ultra thick, ->] (\a/5-1,-\a/5-1) -- (\a/5-1,\a/5-1); 
			\draw[ultra thick, ->] (\a/5-1,\a/5-1) -- (-\a/5-1,\a/5-1);
			\draw[ultra thick, ->] (-\a/5-1,\a/5-1) -- (-\a/5-1,-\a/5-1);
			\draw[ultra thick, ->] (-\a/5-1,-\a/5-1) -- (\a/5-1,-\a/5-1);
		}
		\foreach \a in {1,...,2}
		{
			\draw[ultra thick, ->] (\a/5-1,-\a/5+1) -- (\a/5-1,\a/5+1); 
			\draw[ultra thick, ->] (\a/5-1,\a/5+1) -- (-\a/5-1,\a/5+1);
			\draw[ultra thick, ->] (-\a/5-1,\a/5+1) -- (-\a/5-1,-\a/5+1);
			\draw[ultra thick, ->] (-\a/5-1,-\a/5+1) -- (\a/5-1,-\a/5+1);
		}
		\draw (-2,-2) -- (2,2);
		\draw (-2,2) -- (2,-2);
		\draw (-1.5,0.5) -- (-0.5, 1.5);
		\draw (1.5,0.5) -- (0.5,1.5);
		\draw (1.5,-0.5) -- (0.5, -1.5);
		\draw (-1.5,-0.5) -- (-0.5,-1.5);
		\end{tikzpicture}%
	}
	\qquad
	\qquad
	\subfloat
	{
		\begin{tikzpicture}[scale=0.5]
		\foreach \x in {-2,...,3} \foreach \y in {-2,...,3}
		{
			\pgfmathparse{mod(\x+\y,2) ? "white" : "black"}
			\edef\nu{\pgfmathresult}
			\path[fill=\nu, opacity=0.5] (\x-1,\y-1) rectangle ++ (1,1);
		}
		\end{tikzpicture}%
	}
	\caption{Action of the flow of $\uu$ from $t=0$ to $t=1/2$. The shaded region denotes the set $\{\zeta_1=1\}$. The figure is from \cite{DeLellis_Giri22}.}
\end{figure}

Let $\bar\zeta_1(x) = \lfloor{x_1}\rfloor + \lfloor{x_2}\rfloor \ mod \ 2$. It is a chessboard pattern based on the standard lattice $\mathbb{Z}^2 \subset \mathbb{R}^2$. Let $\zeta^1$ be the unique bounded weak solution of \eqref{eqPDE} along $\uu$ from Theorem \ref{thm_ambrosio}. Then, we have the following formula  $\XX(t,0,\cdot)_\#\bar\zeta_1(x)\Leb{d}=\zeta_1(t,x)\Leb{d}$. 
Using property (R), we have 
\begin{equation}\label{e:refining}
\zeta_1(1/2, x) = 1 - \bar\zeta_1(2x) .
\end{equation}
We define $\bb_{DP}:[0,+\infty)\times\R^2\to\R^2$ as follows. First of all, set $\bb(t,x)\equiv 0$ for $t>1$. Then, set $\bb_{DP}(t, x) = \uu(x)$ for $1/2<t<1$ and $\bb_{DP}(t, x) = \uu(2^k x)$ for $1/2^{k+1}<t<1/2^{k}$.
In particular, this yields a bounded weak solution of \eqref{eqn_boundary_value} along $\tilde\bb_{DP}$ (the extension by zero to negative times from \eqref{eqn_extension_zero}) with boundary datum $\zeta_1(1,x)=\bar\zeta_1(x)$. Moreover, using recursively the appropriately scaled version of \eqref{e:refining} we can check that 
\begin{equation*} 
\zeta_1 (1/2^{k}, x) = \bar\zeta_1 (2^{k} x) \quad\text{for $k$ even,}\qquad \zeta_1 (1/2^{k}, x) =1- \bar\zeta_1 (2^{k} x)\quad\text{ for $k$ odd. }
\end{equation*} 

Likewise, $\zeta_2(t,x)=1-\zeta_1(t,x)$ is a solution of \eqref{eqn_boundary_value} along $\tilde\bb_{DP}$ with boundary datum $\zeta_2(1,x)=\bar\zeta_2(x)=1-\bar\zeta_1(x).$ Notice also that for $i=1,2$, we have $\zeta_i(2^{-k},\cdot)$ converges weakly-star in $L^\infty(\R^d)$ to $1/2$, as $k\to +\infty$. Therefore, for $i=1,2$, we have that $\zeta_i$ are both bounded weak solutions of \eqref{eqPDE} along $\bb_{DP}$ with initial datum $\bar\rho=1/2$. 

\subsection{Properties of $\bb_{DP}$}\label{sec_prop_depauw} 
We summarise the properties of the bounded, divergence-free vector field $\bb_{DP}:[0,+\infty)\times\R^d\to\R^d$ and of the two solutions $\zeta_1$ and $\zeta_2$ we have constructed: 
\begin{enumerate} [$(a)$]
	\item $\bb_{DP}\in L_{loc}^1((0,+\infty);BV_{loc}(\R^d;\R^d))$;
	\item  $\bb_{DP}\notin   L_{loc}^1([0,+\infty);BV_{loc}(\R^d;\R^d))$;
	\item $\zeta_i$ is a bounded weak solution of \eqref{eqPDE} along $\bb_{DP}$ with initial datum $\bar\zeta_i=1/2$ for $i=1,2$;
	\item $\zeta_i\in C([0,+\infty);w^*-L^\infty(\R^d))$ for $i=1,2$;
	\item  for every $t\in(0,+\infty)$, $\zeta_i(t,x)\in\{0,1\}$ for $\Leb{d}$-a.e. $x\in\R^d$, for $i=1,2$; 
	\item for every $t\in[0,+\infty),$ we have $ \zeta_1(t,x) =1- \zeta_2(t,x)$ for $\Leb{d}$-a.e. $x\in\R^d$;
	\item\label{item_control_local_avrg} for every $k\in\N$ and for $i=1,2$, we have
	\begin{equation*}
\fint_{S} \zeta_i(2^{-(k+1)},x)dx=1/2\qquad \forall S\in \mathcal{S}_k^1.
	\end{equation*}
\end{enumerate}

\subsection{Weak approximation of initial datum} Given a sequence of indicator functions, which is oscillating infinitely fast, and satisfies a constraint of spatial homogeneity, any initial datum $\bar\rho$ can be weakly approximated by a sequence localised on these indicator functions. This is recorded in the following lemma. 
\begin{lemma}\label{lem_exist_approx_sequence} 
	Consider a sequence $(f^k)_{k\in\N}$ of functions on $\R^d$ such that for every $k\in\N$, we have $f^k(x)\in\{0,1\}$ for a.e. $x\in\R^d$, and such that and every $S\in\mathcal{S}^1_k$, we have
	\begin{equation} \label{eqn_control_local_average} 
	\fint_{S}f^{k+1}(y)dy = 1/2. 
	\end{equation}
	Consider a function $\bar\rho\in L^\infty(\R^d)$. 
	Then, there exists a sequence of functions $(\bar\rho^k)_{k\in\N}$ uniformly bounded in $L^\infty(\R^d)$ such that 
	\begin{equation}\label{eqn_support_rho^k}
	\bar\rho^kf^{k+1}=\bar\rho^k,
	\end{equation} 
	and 
	\begin{equation} \label{eqn_conv_rho^k} 
	 \bar \rho^k\rightharpoonup \bar\rho \quad\text{$w^*-L^\infty(\R^d)$}.
	\end{equation}
	{ Moreover, if $\bar\rho$ is non-negative, then the sequence $(\bar\rho^k)_{k\in\N}$ can be chosen to consist of non-negative functions.}
\end{lemma}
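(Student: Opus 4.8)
The plan is to define $\bar\rho^k$ by an explicit formula — a cube-wise average of $\bar\rho$, doubled and localised on $\{f^{k+1}=1\}$ — and then to prove \eqref{eqn_conv_rho^k} by decomposing $\bar\rho^k$ into a martingale-type approximation of $\bar\rho$ plus an oscillatory error whose weak-$*$ limit vanishes.

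For $x\in\R^d$ and $k\in\N$, write $S_k(x)$ for the unique cube of $\mathcal{S}^1_k$ containing $x$, and set
\[
\bar\rho^k(x):=2\Big(\fint_{S_k(x)}\bar\rho(y)\,dy\Big)f^{k+1}(x).
\]
First I would record the easy properties. Since $f^{k+1}\in\{0,1\}$ a.e.\ we have $(f^{k+1})^2=f^{k+1}$ a.e., so $\bar\rho^k f^{k+1}=\bar\rho^k$, which is \eqref{eqn_support_rho^k}; moreover $|\bar\rho^k|\leq 2\|\bar\rho\|_{L^\infty_x}$, so $(\bar\rho^k)_{k\in\N}$ is uniformly bounded in $L^\infty(\R^d)$, and it consists of non-negative functions whenever $\bar\rho\geq0$. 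The one computation that uses the hypothesis \eqref{eqn_control_local_average} is that $\bar\rho^k$ has the same mean as $\bar\rho$ on every cube of $\mathcal{S}^1_k$: for $S\in\mathcal{S}^1_k$,
\[
\int_S\bar\rho^k\,dx=2\Big(\fint_S\bar\rho\Big)\int_S f^{k+1}\,dx=2\Big(\fint_S\bar\rho\Big)\cdot\frac{|S|}{2}=\int_S\bar\rho\,dx.
\]

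Then I would split $\bar\rho^k=a^k+g^k$, where $a^k(x):=\fint_{S_k(x)}\bar\rho$ is the conditional expectation of $\bar\rho$ with respect to the (increasing in $k$) $\sigma$-algebra generated by the cubes of $\mathcal{S}^1_k$. Since $\bar\rho\in L^1_{loc}(\R^d)$ and $\mathcal{S}^1_{k+1}$ refines $\mathcal{S}^1_k$, the martingale convergence theorem — equivalently, the Lebesgue differentiation theorem along dyadic cubes — gives $a^k\to\bar\rho$ a.e.; combined with $|a^k|\leq\|\bar\rho\|_{L^\infty_x}$ and dominated convergence this yields $a^k\rightharpoonup\bar\rho$ in $w^*-L^\infty(\R^d)$. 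For the error term, the cube-wise mean identity above gives $\int_S g^k=0$ for all $S\in\mathcal{S}^1_k$, while $\|g^k\|_{L^\infty_x}\leq 3\|\bar\rho\|_{L^\infty_x}$. Testing against $\phi\in C_c(\R^d)$: given $\e>0$, pick $\delta>0$ with $|\phi(x)-\phi(y)|<\e$ whenever $|x-y|<\delta$, take $k$ so large that $\operatorname{diam}S<\delta$ for $S\in\mathcal{S}^1_k$, and subtract from $\phi$ its value $\phi(x_S)$ at the centre $x_S$ of each $S$ (legitimate since $\int_S g^k=0$) to get
\[
\Big|\int_{\R^d}g^k\phi\,dx\Big|\leq\sum_{S\cap\supp\phi\neq\emptyset}\int_S|g^k|\,|\phi-\phi(x_S)|\,dx\leq 3\|\bar\rho\|_{L^\infty_x}\,\e\,|K|,
\]
with $K$ a fixed compact neighbourhood of $\supp\phi$. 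Since $\e$ is arbitrary, $\int g^k\phi\to0$ for every $\phi\in C_c(\R^d)$; by density of $C_c(\R^d)$ in $L^1(\R^d)$ and uniform boundedness of $(g^k)$ in $L^\infty(\R^d)=(L^1(\R^d))^*$, this gives $g^k\rightharpoonup0$ in $w^*-L^\infty(\R^d)$. Adding the two limits proves \eqref{eqn_conv_rho^k}.

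There is no genuine obstacle here; the argument is essentially one computation together with two standard facts. The points deserving a line of care are the convergence $a^k\to\bar\rho$ (martingale convergence / Lebesgue differentiation along the refining partitions $\mathcal{S}^1_k$) and the routine upgrade from testing against $C_c(\R^d)$ to weak-$*$ convergence against all of $L^1(\R^d)$ via density and the uniform $L^\infty$ bound; the non-negativity addendum is immediate from the explicit formula for $\bar\rho^k$.
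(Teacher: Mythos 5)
Your construction of $\bar\rho^k$ is exactly the paper's (twice the cube-wise average of $\bar\rho$ over $\mathcal S^1_k$, localised on $\{f^{k+1}=1\}$), and the one place where the hypothesis \eqref{eqn_control_local_average} enters is the same: the identity $\int_S\bar\rho^k\,dx=\int_S\bar\rho\,dx$ for every $S\in\mathcal S^1_k$. Where you diverge is in how you convert this identity into the weak-$*$ convergence \eqref{eqn_conv_rho^k}. The paper observes that finite linear combinations of the indicators $\mathbb{1}_{\tilde S}$, $\tilde S\in\mathcal S^1_l$, $l\in\N$, are dense in $L^1(\R^d)$, and that for $k\geq l$ the cube-wise mean identity gives \emph{exact equality} $\int\bar\rho^k\mathbb{1}_{\tilde S}=\int\bar\rho\,\mathbb{1}_{\tilde S}$, so the convergence against each such test function is trivial and the uniform $L^\infty$ bound does the rest. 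You instead split $\bar\rho^k=a^k+g^k$ with $a^k$ the dyadic conditional expectation, invoke martingale convergence (Lebesgue differentiation along the refining partitions) for $a^k\rightharpoonup\bar\rho$, and kill $g^k$ by a zero-mean-plus-modulus-of-continuity argument against $C_c(\R^d)$ test functions. Both arguments are complete and correct; the paper's is shorter because the dense class of test functions is adapted to the partition, so no differentiation theorem or oscillation estimate is needed, whereas your decomposition is more robust (it would survive, e.g., replacing the dyadic averages by any other sequence $a^k$ converging weakly-$*$ to $\bar\rho$ with the same cube-wise means). The non-negativity addendum is handled identically in both.
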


\begin{proof}
Define
	 \begin{equation}\label{eqn_funct_lem}
	\bar\rho^k(x):=2f^{k+1}(x)\sum_{S\in \mathcal{S}^1_k}\mathbb{1}_S(x)\fint_S\bar\rho(y)dy,
	\end{equation}
	and observe that \eqref{eqn_support_rho^k} is satisfied. 
	Note that $$\sup_{k\in\N}\|\bar\rho^k\|_{L^\infty_x}\leq 2\|\bar\rho\|_{L^\infty_x},$$
	and that finite linear combinations of functions in 
	\begin{equation*}
	\Big\{\mathbb{1}_S \, :\, S\in \mathcal{S}^1_l, \;l\in\N\Big\},
	\end{equation*}
	are dense in $L^1(\R^d)$. Therefore, to prove \eqref{eqn_conv_rho^k}, it suffices to prove that for every $l\in\N$ and every $\tilde S\in\mathcal{S}^1_l$, we have 
	\begin{equation}
	\int_{\R^d} \bar\rho^k(x)\mathbb{1}_{\tilde S}(x)dx\to \int_{\R^d}\bar\rho(x)\mathbb{1}_{\tilde S}(x)dx,
	\end{equation}
	as $k\to +\infty.$ 
	Let $l\in\N$ and $\tilde S\in \mathcal{S}^1_l$. 
	Then, for $k\geq l$, we have 
	\begin{equation}
	\begin{split}
	\int_{\R^d}\bar\rho^k(x)\mathbb{1}_{\tilde S}(x)dx&=2\int_{\tilde S}f^{k+1}(x)\sum_{S\in \mathcal{S}_k^1}\mathbb{1}_S(x)\fint_S\bar\rho(y)dydx,\\
	&=2k^d\sum_{S\in \mathcal{S}_k^1,  S\subset\tilde S}\int_{ S}f^{k+1}(x)dx\int_S\bar\rho(y)dy,\\
	\text{by \eqref{eqn_control_local_average}}&=\sum_{S\in \mathcal{S}^1_k,  S\subset \tilde S} \int_S\bar\rho(y)dy,\\
	&=\int_{\R^d}\bar\rho(y)\mathbb{1}_{\tilde S}(y)dy.
	\end{split}
	\end{equation}
	{Moreover it is clear that if $\bar\rho$ is non-negative, then so are the functions defined by \eqref{eqn_funct_lem}.}
	This proves the thesis. 
\end{proof}
\subsection{Proof of Theorem \ref{thm_depauw}} We can now conclude this section. 
\begin{proof}[Proof of Theorem \ref{thm_depauw}]
	If $\bar\rho=0$, then $\zeta_i(t,x) -1/2$ is a bounded weak solution of \eqref{eqPDE} along $\bb_{DP}$ with initial datum $\bar\rho=0$ for $i=1,2$. 
	Suppose that $\bar\rho\neq 0$. 
	Consider the following sequence of bounded, divergence-free vector fields: 
	\begin{equation*}
\bb_{DP}^k(t,x):=\left\{ 
\begin{split} 
\bb_{DP}(t,x)\qquad &\text{if} \quad t\geq 2^{-k},\\
0\qquad &\text{if}\quad t<2^{-k}.
\end{split}
\right. 
\end{equation*}

By Section \ref{sec_prop_depauw}, for $i=1,2$ the densities $\zeta_i^k$ given by: 
	\begin{equation*}
\zeta_i^k(t,x):=\left\{ 
\begin{split} 
\zeta_i(t,x)\qquad &\text{if} \quad t\geq 2^{-k},\\
\zeta_i(2^{-k},x)\qquad &\text{if}\quad t<2^{-k},
\end{split}
\right. 
\end{equation*}
are both bounded weak solutions of \eqref{eqPDE} along $\bb_{DP}^k$. 

\bigskip 
STEP 1 (Apply Lemma \ref{lem_exist_approx_sequence}):
For $i=1,2$, the sequence $(\zeta_i(2^{-k},\cdot))_{k\in\N}$ satisfies the hypothesis of Lemma \ref{lem_exist_approx_sequence} by item (\ref{item_control_local_avrg}) of Section \ref{sec_prop_depauw}. So there exists a sequence $(\bar\rho_i^k)_{k\in\N}$ uniformly bounded in $L_x^\infty$ and such that for a.e. $x\in\R^d$
\begin{equation*}
\bar\rho_i^k(x) \zeta_i(2^{-k-1},x)=\bar\rho_i^k(x),
\end{equation*}
and also 
\begin{equation*}
\bar\rho_i^k\rightharpoonup \bar\rho \qquad\text{ in $w^*-L^\infty(\R^d)$}.
\end{equation*}

\bigskip
STEP 2 (Getting weak solutions of \eqref{eqPDE} along $\bb_{DP}$): For $i=1,2$, we apply Theorem \ref{thm_uniqueness_bv}. There thus exists a unique bounded weak solution of \eqref{eqPDE} along $\bb_{DP}^k$ with initial datum $\bar\rho_i^k$ , which we shall denote by $\rho^k_i$, and satisfies $\|\rho^k_i\|_{L^\infty_{t,x}}=\|\bar\rho^k_i\|_{L^\infty_{x}}$. Therefore, by the Banach-Alaoglou Theorem, along an increasing map $\psi:\N\to\N$, we have $\rho^{\psi(k)}_i$ converge to $\rho_i$ weakly-star in $L^\infty((0,+\infty)\times\R^d)$ as $k\to+\infty$. Since $\bb_{DP}^k\to \bb_{DP}$ in $L^1_{loc}$, it follows that $\rho_i$ is a bounded weak solution of \eqref{eqPDE} along $\bb_{DP}$ with initial datum $\bar\rho$.

\bigskip 

STEP 3 (Control on $\rho_i$):
By Theorem \ref{thm_uniqueness_bv}, for $M\in\R$ there exists a unique bounded weak solution of \eqref{eqPDE} along $\bb_{DP}^k$ with initial datum $M\zeta_i(2^{-k},x)-\bar\rho^k_i(x)$ for $i=1,2$, and by linearity it is
	\begin{equation*}
\rho_i^{k,M}(t,x):=\left\{ 
\begin{split} 
M\zeta_i(t,x)-\rho^k_i(t,x)\qquad &\text{if} \quad t\geq 2^{-k},\\
M\zeta_i(2^{-k},x)-\bar\rho^k_i(x)\qquad &\text{if}\quad t<2^{-k}.
\end{split}
\right. 
\end{equation*}
Now, choose
$$M>\sup_{k\in\N,i=1,2}\|\bar\rho^k_i\|_{L_x^\infty},$$
so that $\rho^{k,-M}\leq 0\leq \rho^{k,M}$, and notice that we have that $\rho^{\psi(k),-M}_i$ and $\rho^{\psi(k),M}_i$ converge weakly-star in $L^\infty((0,+\infty)\times\R^d)$ to $M\zeta_i-\rho_i$ and $-M\zeta_i-\rho_i$ respectively as $k\to+\infty$, whence 
\begin{equation}\label{eqn_supp_rho_i}
-M\zeta_i\leq \rho_i\leq M\zeta_i \qquad \text{for $i=1,2$}.
\end{equation}

\bigskip 
STEP 4 ($\rho_1\neq \rho_2$): By Lemma \ref{lem_given_weak_sol_exist_cont_repr}, we have the time continuous representative $\tilde\rho_1$ and $\tilde\rho_2$, of $\rho_1$ and $\rho_2$ respectively. We also know that $\rho_i(0,\cdot)=\bar\rho(\cdot)$, and since
$\bar\rho\neq 0$, we have for $i=1,2$ that $$\sup_{t<\delta}\|\tilde\rho_i(t,\cdot)\|_{L^\infty_x}>0,$$ for $\delta>0$ sufficiently small. Thus, there exists
 a function $\phi\in L^1((0,+\infty)\times\R^d)$ such that for $i=1,2$, we have
\begin{equation}
\int_0^{+\infty}\int_{\R^d}|\rho_i(t,x)|\phi(t,x)dxdt=\int_0^{+\infty}\int_{\R^d}|\tilde\rho_i(t,x)|\phi(t,x)dxdt >0.
\end{equation}
Now by \eqref{eqn_supp_rho_i} and since $\zeta_i(t,x)=\mathbb{1}_{\supp\zeta_i}(t,x)$, we have 
\begin{equation}
\begin{split} 
\int_0^{+\infty}\int_{\R^d}|\rho_i(t,x)|\phi(t,x)\zeta_i(t,x)dxdt=\int_0^{+\infty}\int_{\R^d}|\rho_i(t,x)|\phi(t,x)dxdt>0.
\end{split} 
\end{equation}
Since $\zeta_1(t,x)+\zeta_2(t,x)=1$ for $\Leb{d+1}$-a.e. $(t,x)\in(0,+\infty)\times\R^d$, we have for $i,j=1,2$ with $i\neq j$
\begin{equation}
\int_0^{+\infty}\int_{\R^d}|\rho_i(t,x)|\phi(t,x)\zeta_j(t,x)dxdt=0.
\end{equation}
Since $\phi\zeta_1$ and $\phi\zeta_2$ are in $L^1((0,+\infty)\times\R^d)$, by the duality $\Big(L^1((0,+\infty)\times\R^d)\Big)^*\cong L^\infty((0,+\infty)\times\R^d)$, we have that $\rho_1$ and $\rho_2$ are distinct as functions in $L^\infty((0,+\infty)\times\R^d)$. 

\bigskip
{
STEP 5 (Non-negativity): Assume now that $\bar\rho$ is non-negative, i.e. that $\bar\rho(x)\geq 0$ for $\Leb{d}$-a.e. $x\in\R^d$ and that $\bar\rho\neq 0$. Then, by Lemma \ref{lem_exist_approx_sequence}, for every $k\in\N$, the functions $\bar\rho_1^k$ and $\bar\rho_2^k$ are non-negative. Therefore, the solutions $\rho_1^k$ and $\rho_2^k$ of \eqref{eqPDE} given in Step 2 are non-negative. Therefore, the $L^\infty$ weak-star limit points $\rho_1$ and $\rho_2$ are also non-negative. This concludes the proof. }
\end{proof}

\section{Appendix} 
\begin{proof}[Proof of Proposition \ref{prop_well_posed}]
	The unique weak solution $\rho^k$ of \eqref{eqPDE} is given by $$\rho^k(t,x)\Leb{d}=\XX^k(t,0,x)_\#\bar\rho\Leb{d}.$$
	By $(ii)$, for any $T>0$ we have $$\sup_{t\in[0,T]}\|\rho^k(t,\cdot)\|_{L^\infty_{x}}\leq C\|\bar\rho\|_{L^\infty_x}.$$
	Let $\xi:\N\to\N$ be an increasing map. Then, by the Banach-Alaoglou, there exists an increasing map $\psi:\N\to\N$, and $\rho\in L^\infty_{loc}([0,+\infty);L^\infty(\R^d))$ such that $\rho^{\psi\circ\xi(k)}$ converges to $\rho$ in weakly-star in $L^\infty((0,T)\times\R^d)$ for every $T>0$. 
	As $\bb^k$ converges strongly to $\bb$ in $L^1_{loc}$, $\rho$ is therefore a bounded weak solution of \eqref{eqPDE} along $\bb$. By Theorem \ref{thm_uniqueness_bv}, there exists a unique such bounded weak solution. The subsubsequence lemma then implies that the whole sequence $\rho^k$ converges to $\rho$ weakly-star in $L^\infty((0,T)\times  \R^d))$ for every $T>0$. This implies that $\rho^k$ converges to $\rho$ in $\mathcal{D}'((0,+\infty)\times\R^d)$. The thesis follows.
\end{proof}

\bibliographystyle{alpha}
\bibliography{bibliografia}

\end{document}